\numberwithin{equation}{section}
\def\serieslogo@{}
\def\@setcopyright{}
\def\p{\partial}
 \def\R{{\mathbb R}}
 \def\dis{\displaystyle}
 \def\ol{\overline}
 \def\bf{\textbf}
 \def\T{\mathcal{T}}
 \def\F{\mathcal{F}}
 \def\A{\mathcal{A}}
\def\oc{\overset\circ}
\def\Hb{{\oc H{}^1_\beta}}
\def\Hg{{\oc H{}^1_\gamma}}
\def\df{\overset{\text{def.}}=}
\newtheorem{thm}{Theorem}%[section]
\newtheorem{lem}[thm]{Lemma}
\newtheorem{prop}[thm]{Proposition}
\newtheorem{defn}[thm]{Definition}
\title[Global well-posedness for the microscopic FENE model]{Global well-posedness for the microscopic FENE model with a sharp boundary condition}
\author{
Hailiang Liu and Jaemin Shin
%\\ Department of Mathematics \\ Iowa State University \\
%Ames, IA 50011
}
\address{Iowa State University, Mathematics Department, Ames, IA 50011} \email{hliu@iastate.edu}
\address{Iowa State University, Mathematics Department, Ames, IA 50011} \email{jmshin@iastate.edu}
\keywords{The Fokker Planck equation, the FENE model, boundary condition, well-posedness}
\date{November 26, 2009}
\begin{document}

\bibliographystyle{abbrv}

\begin{abstract} We prove global well-posedness for the microscopic FENE model
under a sharp boundary requirement. The well-posedness of the FENE
model that consists of the incompressible Navier-Stokes equation and
the Fokker-Planck equation has been studied intensively, mostly with
the zero flux boundary condition. Recently it was illustrated by C.
Liu and H. Liu [2008, SIAM J. Appl. Math., 68(5):1304--1315] that
any preassigned boundary value of a weighted distribution will
become redundant once the non-dimensional parameter $b>2$. In this
article, we show that for the well-posedness of the  microscopic
FENE model ($b>2$) the least boundary requirement is that the
distribution near boundary needs to approach zero faster than the
distance function. Under this condition, it is shown that there
exists a unique weak solution in a weighted Sobolev space. Moreover, such a condition still ensures that the
distribution is a probability density. The sharpness of this boundary requirement is shown by a construction of
infinitely many solutions when the distribution approaches zero as
fast as the distance function.
\end{abstract}

\maketitle

\bigskip

\tableofcontents

\section{Introduction}
It is well-known that the following system coupling incompressible Navier-Stokes equation for the macroscopic velocity field $v(t,x)$ and the Fokker-Planck equation for the probability density function $f(t,x,m)$ describes diluted solutions of
polymeric liquids with noninteracting polymer chains
 \begin{eqnarray}
\p_t v + (v\cdot \nabla) v + \nabla p &=& \nabla \cdot \tau_p + \nu \Delta v, \label{n1}\\
\nabla\cdot v&=&0,\label{n2}\\
\p_tf+ (v\cdot \nabla)f+ \nabla_m\cdot(\nabla v m f)&=&\frac2\zeta \nabla_m \cdot(\nabla_m \Psi(m)f)+\frac{2kT}{\zeta}\Delta_m f,\label{n3}
\end{eqnarray}
where $x\in\R^n$ is the macroscopic Eulerian coordinate and $m\in \R^n$ is the microscopic molecular configuration variable. In this model, a polymer is idealized as an elastic dumbbell consisting of two beads joined by a spring that can be modeled by a vector $m$ (see e.g \cite{BCAH87}).
In the Navier-Stokes equation \eqref{n1}, $p$ is hydrostatic pressure, $\nu$ is the kinematic viscosity coefficient, and $\tau_p$ is a tensor representing the polymer contribution to stress,
\begin{equation*}
\tau_p=\lambda \int m\otimes\nabla_m \Psi(m)f dm,
\end{equation*}
where $\Psi$ is the elastic spring potential and $\lambda$ is the
polymer density constant. In the Fokker-Planck equation \eqref{n3},
$\zeta$ is the friction coefficient of the dumbbell beads, $T$ is
the absolute temperature, and $k$ is the Boltzmann constant. Notice
that the Fokker-Planck equation can be written as a stochastic
differential equation (see \cite{Oe:1996}).

One of the simplest model is the Hookean model in which the potential $\Psi$ is given by
\begin{equation*}
\Psi(m)=\frac{H |m|^2}{2},
\end{equation*}
where $H$ is the elasticity constant. A more realistic  model is the finite extensible nonlinear elasticity (FENE) model with
\begin{equation}\label{potential}
\Psi(m)= -\frac{Hb}{2}\log \left(1-\frac{|m|^2}{b}\right), \quad m\in B.
\end{equation}
Here $B\df B(0,\sqrt b)$ is the ball with center $0$ and radius
$\sqrt{b}$ which denotes the maximum dumbbell extension. In this work we
shall focus our attention on the potential \eqref{potential} and the
case $b>2$, which is known to contain the parameter range of
physical interest. We refer the reader to \cite{DE:1986,BCAH87} for
a comprehensive survey of the physical background.

In past years the well-posedness of the FENE model
\eqref{n1}-\eqref{n3} has been studied intensively in several
aspects. For local well-posedness of strong solutions we refer the
reader to \cite{JLL04} for the FENE model (in the setting where the
Fokker-Planck equation is formulated by a stochastic differential
equation) with $b > 2$ or sometime $b > 6$, \cite{ETiZh04} for a
polynomial force and  \cite{ZhZh06} for the FENE model with $b >
76$. For a preliminary study on some related coupled PDE systems, we
refer to the earlier work \cite{Rm91} (however, the FENE model was
not addressed there). Moreover, the authors in \cite{LLZ07} proved
global existence of smooth solutions near equilibrium under some
restrictions on the potential; further developments were made
in subsequent works \cite{LiZh08, LiZhZh08}.  %For the global well-posedness of \eqref{n1}-\eqref{n3}, we refer to  \cite{LiZhZh08},
More recently, N. Masmoudi
\cite{Ma08} proved global existence for the FENE model
\eqref{n1}-\eqref{n3} for a class of potentials (\ref{potential})
with $b>0$ assuming  that the data is small, or the model is
restricted to the co-rotational case in dimension two.
%global well-posedness for small data as well as local results for general data with a very general class of potentials.

For results concerning the existence of weak solutions to coupled
Navier-Stokes-Fokker-Planck systems and a detailed survey of related
literature we refer to \cite{BaSchSu05,LiMa07,LZ07,BaSu07,BaSu08}.
For an earlier result on existence of weak solutions, we refer to
\cite{DqLcYp05} for the Fokker-Planck equation alone with $b>4$. On
the other hand, the authors in \cite{JLLO06}, investigated the
long-time behavior of both Hookean models and FENE models in several
special flows in a bounded domain with suitable boundary conditions.

%\textcolor[rgb]{1.00,0.00,0.00}{The well-posedness of the
%Fokker-Planck equations for a class of FENE type potentials was
%also studied in \cite{KS09} using a different approach; a similar
%Dirichlet type boundary requirement is considered through a
%transformed weak formulation.}

%Global existence of weak solutions was proved in \cite{LiMa07} for
%the co-rotational model, see also \cite{} for $b\geq 10$. Global
%existence of weak solutions to general noncorotational model was
%also considered in \cite{BaSchSu05}.

%For example, B. Jourdain, T. Leli{\`e}vre, and C. Le Bris established the well-posedness of the system  with the %Fokker-Planck equation \eqref{n3} replaced by a stochastic differential equation using the specific structure of
%shear flow in \cite{JLL04}. The extension to higher dimension  has been done by E, Li, and Zhang \cite{ETiZh04} to analyze %the local well-posedness of the coupled system.
%H. Zhang and P. Zhang proved local well-posedness for the full system \eqref{n1}-\eqref{n3} when $b>76$ in \cite{ZhZh06},
The complexity with the FENE potential lies mainly with the
singularity of the equation at the boundary. To overcome this
difficulty, several transformations relating to the equilibrium solution have been
introduced in literature. See, e.g. \cite{CL04,CL04+, DqLcYp05, LiLi08, KS09}. A detailed discussion will be given in Section 2.
In \cite{LiLi08}, C. Liu and H. Liu closely examined the necessity
of Dirichlet boundary conditions for the microscopic FENE model. By
the method of the Fichera function they were
able to conclude that $b=2$ is a threshold in the sense that for
$b>2$ any preassigned boundary value of the ratio of the
distribution and the equilibrium will become redundant, and for
$b<2$ that value has to be a priori given. For the microscopic FENE
model, singularity in the potential requires at least the zero
Dirichlet boundary condition
\begin{equation}\label{zero}
  f|_{\p B}=0.
\end{equation}
This is consistent with the result in \cite{JoLe03}, which states that the stochastic solution trajectory does not reach the boundary almost surely.

The boundary issue for the underlying FENE model is fundamental, and our main quest in this paper is
whether one can identify a sharp boundary requirement so that both existence and uniqueness of a global weak solution
to the microscopic FENE model can be established, also the solution remains a probability density. The answer is positive, and we claim that $f$ must satisfy the following
boundary condition
\begin{equation}\label{n6}
fd^{-1}\left|_{\p B}\right. =0 \quad \text{for~almost~all~} t>0,
\end{equation}
where $d\df d(m,\p B)$ denotes the distance function from $m \in B$
to the boundary $\p B$. Our claim is supported by our main results:
the global well-posedness for the Fokker-Planck equation stated in
Theorem \ref{thm2},  the property of the solution as a probability density given in Proposition \ref{prop+}, and the sharpness of (\ref{n6}) stated in
Proposition \ref{prop2}.

%The importance of the Fokker-Planck equation itself as the added complexity with the FENE potential affects mostly the analysis of the Fokker-Planck equation.
In this article, we focus on the underlying Fokker-Planck equation
\eqref{n3} alone. Let $v(t,x)$ be the velocity field governed by \eqref{n1} and \eqref{n2}. We assume that this underlying velocity field is smooth, then a simplification can be made by considering the
microscopic model (\ref{n3}) along a particle path defined as
\begin{equation*}
\p_t \mathrm{X}(t,x)=v(t,\mathrm{X}(t,x)),\quad \mathrm{X}(t=0,x)=x.
\end{equation*}
For each fixed $x$, the distribution function $\tilde f
(t,m;x)\df f(t,\mathrm{X}(t,x),m)$ solves
\begin{equation*}
\p_t\tilde f+ \nabla_m\cdot(\nabla v m \tilde f)=\frac2\zeta
\nabla_m \cdot(\nabla_m \Psi(m)\tilde f)+\frac{2kT}{\zeta}\Delta_m
\tilde f.
\end{equation*}
By a suitable scaling (\cite{LiLi08}), and denote $\tilde f$ still by $f(t, m)=\tilde f(t, m; x)$,
we arrive at the following equation
\begin{equation}\label{n4}
\p_t f+ \nabla\cdot(\kappa m f)=\frac12 \nabla
\cdot\left(\frac{bm}{\rho} f \right)+\frac12\Delta f.
\end{equation}
Here, $\rho=b-|m|^2$ and $\kappa(t)= \nabla v(t, \mathrm{X}(t,x))$
is a bounded  matrix such that $\text{Tr}(\kappa)=0$. We omit $m$
from  $\nabla_m$  in (\ref{n4}) for notational convenience. In this
paper we prove well-posedness of (\ref{n4}) subject to some side
conditions. The well-posedness of the full coupled system
\eqref{n1}-\eqref{n3} is the subject of a forthcoming paper
\cite{LiSh09}.

A weak solution of the Fokker-Planck equation
\eqref{n4} with the initial condition
\begin{equation}\label{initial}
f(0,m)=f_0(m), \quad m \in B,
\end{equation}
and boundary requirement \eqref{n6} is defined in the following.
\begin{defn}\label{def1} We say $f$ is a weak solution of \eqref{n4}, \eqref{initial}, and \eqref{n6} if the following conditions are
satisfied:

For an arbitrary subdomain $B'$ of $B$ such that
$\ol{B'}\subset B$ and almost all $t \in (0,T)$,
\begin{enumerate}
\item[(1)] $f \in L^2(0,T;H^1(B')) ~and~ \p_t f \in
L^2(0,T;H^{-1}(B'))$,
\item[(2)] for any $\varphi\in C^1_c(B)$,
\begin{equation}\label{2.2}
\int_{B} \left[ \p_tf \varphi  - f \kappa m \cdot \nabla \varphi +
\frac{b f m \cdot \nabla \varphi}{2\rho} + \frac12 \nabla f
\cdot\nabla \varphi \right] dm =0,
\end{equation}
\item[(3)]
\begin{equation}\label{2-3}
f(0,m) = f_0(m)\quad \text{in~} L^2(B'),
\end{equation}
\item[(4)] and for $B_r\df B(0,r)$,
\begin{equation}\label{bdy3}
\lim_{r\to\sqrt b} ||fd^{-1}\left|_{\p B_r}\right.||_{L^2(\p B_r)} =
0.
\end{equation}
\end{enumerate}
\end{defn}
\noindent Note that \eqref{2-3} makes sense since $f \in C([0,T];
L^2(B'))$ implied by (1) above, and also $fd^{-1}\left|_{\p
B_r}\right.$ is well defined in $L^2(\p B_r)$ by the standard trace
theorem.

%\begin{rmk} \tb{
Regarding the weak solution defined above, several remarks are in order.
\begin{itemize}
\item  The reason for taking compactly supported functions as test functions in Definition 1
is that we want to avoid any priori restriction to a particular
weighted Sobolev space. It is this treatment that allows us to prove
sharpness of boundary condition (\ref{bdy3}).

\item  Boundary condition
\eqref{bdy3} ensures that $f(t,\cdot) \in L^1(B)$ for each $t$.
Indeed, we can choose $r_0 \in (0,\sqrt b)$ such that if $r\geq r_0$
\begin{equation*}
||fd^{-1}|_{\p B_r}||_{L^2(\p B_r)} \leq 1.
\end{equation*}
Then
\begin{eqnarray*}
\int_B |f| dm &=& \int_{B_{r_0}} |f| dm +  \int_{r_0}^{\sqrt b}
\int_{\p B_{r}} |f| dS dr\\
& \leq & C_1 ||f||_{L^2(B_{r_0})} + C_2 \int_{r_0}^{\sqrt b}
||fd^{-1}|_{\p B_r}||_{L^2(\p B_r)} dr <\infty.
\end{eqnarray*}
\item  Boundary condition (\ref{bdy3}) or  \eqref{n6} is, in its
type, different from the zero flux boundary condition
\begin{equation}\label{bdy}
\left(\frac{bmf}{2\rho}  + \frac{1}{2}\nabla_m f - \kappa m
f\right)\cdot \frac{m}{|m|} \Big|_{\p B}=0,
\end{equation}
which is known to preserve the conservation property, and has been used in many priori works. The relation of these two types
of boundary conditions will be discussed in Section 2 as well.
\end{itemize}
%\end{rmk}
%\textcolor[rgb]{1.00,0.00,0.00}{
 In order to establish an existence
theorem, we now identify a subspace of $H^1(B)$ with an appropriate
weight to incorporate boundary requirement (\ref{bdy3}). For
simplicity, we consider the case with trivial velocity field such
that $\kappa=0$, then equation (\ref{n4}) becomes
\begin{equation*}
\p_t f= \frac12\nabla\cdot \left(\rho^{b/2} \nabla
\left(\frac{f}{\rho^{b/2}}\right)\right).
\end{equation*}
It follows from this conservative form that the only equilibrium
solution $f^{eq}$ must be a multiplier of $\rho^{b/2}$, i.e.
$$
f^{eq}=Z^{-1}\rho^{b/2},
$$
where $Z$ is a normalization factor such that $\int_B f^{eq} dm=1$.
%}

%\textcolor[rgb]{1.00,0.00,0.00}{
We are interested in the case
\begin{equation}\label{condition b}
b>2.
\end{equation}
In such a  case $ f^{eq}$ satisfies boundary requirement (\ref{n6}).
Moreover
\begin{equation*}
f^{eq} \in H^1_{-b/2}(B).
\end{equation*}
%}
%Consider the equilibrium $f^{eq}$ of \eqref{n4} for non-flow case,
%i.e. $\kappa=0$. Rewrite \eqref{n4} as
%\begin{equation*}
%\p_t f= \frac12\nabla\cdot (\nabla (\rho^{-b/2} f) \rho^{b/2}).
%\end{equation*}
%It follows that
%\begin{equation*}
%f^{eq}=\rho^{b/2}.
%\end{equation*}
%Obviously, $f^{eq}$ has the zero trace on the boundary $\p B$ and satisfies (\ref{n6}) for
%\begin{equation*}
%f^{eq}d^{-1}|_{\p B}=0
%\end{equation*}
%for
%\begin{equation}\label{condition b}
%b>2.
%\end{equation}
%Thus, \eqref{2.2} is satisfied even for a test function $\varphi \in
%H^1(B)$ without the compactly supported property. Also
%\begin{equation*}
%f^{eq} \in H^1_{-b/2}(B)
%\end{equation*}
%as long as \eqref{condition b} is assumed.
Here, $H^1_{-b/2}(B)=\{\phi: \phi, \p_{m_j}\phi \in L^2_{-b/2}(B) \}$ with
\begin{equation*}
L^2_{-b/2}(B) =\left\{\phi:\int_B \phi^2 (b-|m|^2)^{-b/2}dm <\infty \right\}.
\end{equation*}

Our main results are summarized in Theorem \ref{thm2}, Proposition \ref{prop+}  and
Proposition \ref{prop2} below.
%\tb{
\begin{thm}\label{thm2}
Assume \eqref{condition b} and $\kappa(t)\in C[0, T]$ for a given $T>0$.  \\
\noindent (i)  If
\begin{equation}\label{ini}
f_0(m) \in L^2_{-b/2} (B) ,
\end{equation}
then there exists a unique solution $f$ of \eqref{n4},
\eqref{initial}, and \eqref{n6} in the sense of
Definition \ref{def1}.  Moreover,
\begin{equation}\label{cont}
\dis\max_{0\leq t\leq T} ||f(t,\cdot)||_{L^2_{-b/2}(B)} + ||f||_{L^2(0,T;H^1_{-b/2}(B))} + ||\p_t f||_{L^2(0,T;(({H^1}_{-b/2})^*(B))} \leq C||f_0||_{L^2_{-b/2}(B)}.
\end{equation}
\noindent (ii) For any
\begin{equation*}
f_0 (m) \in L^2_{loc}(B),
\end{equation*}
there exists at most one solution $f$ .
\end{thm}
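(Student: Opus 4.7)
The natural strategy is to exploit the symmetric structure uncovered by the change of variable $g \df f\rho^{-b/2}$. A short computation gives $\nabla f + \frac{bm}{\rho}f = \rho^{b/2}\nabla g$, so (\ref{n4}) rewrites as $\p_t f + \nabla\cdot(\kappa m f) = \tfrac12 \nabla\cdot(\rho^{b/2}\nabla g)$. Formally testing against $g$ and using $\text{Tr}(\kappa)=0$ together with $\nabla\rho^{b/2}=-b m \rho^{b/2-1}$, one obtains the energy identity
\begin{equation*}
\frac{1}{2}\frac{d}{dt}\|f\|_{L^2_{-b/2}}^2 + \frac{1}{2}\int_B \rho^{b/2}|\nabla g|^2\,dm = \frac{b}{2}\int_B g^2\rho^{b/2-1}(\kappa m\cdot m)\,dm,
\end{equation*}
valid as long as the boundary contributions from integration by parts vanish. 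This is precisely where the sharp boundary requirement (\ref{n6}) and the hypothesis $b>2$ do their work.

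For the existence half of (i), I would construct approximate solutions by a standard scheme --- either truncating the domain to $B_r$, $r<\sqrt{b}$, with zero Dirichlet data, or Galerkin-projecting onto eigenfunctions of the symmetric operator $-\rho^{-b/2}\nabla\cdot(\rho^{b/2}\nabla\,\cdot)$ in $L^2_{-b/2}(B)$. The identity above is rigorous at the approximate level, and its right-hand side is absorbed into the dissipation by a weighted Hardy-type inequality
\begin{equation*}
\int_B g^2\rho^{b/2-1}|m|^2\,dm \;\leq\; C\int_B \rho^{b/2}|\nabla g|^2\,dm + C\int_B \rho^{b/2}g^2\,dm,
\end{equation*}
which holds precisely because $b/2>1$. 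Gronwall then delivers uniform bounds in $L^\infty(0,T;L^2_{-b/2})\cap L^2(0,T;H^1_{-b/2})$, and the time-derivative bound in $L^2(0,T;(H^1_{-b/2})^*)$ follows by duality, establishing (\ref{cont}). Weak/weak-$\ast$ compactness produces a limit $f$; since the test functions in (\ref{2.2}) are only required to lie in $C^1_c(B)$, the limit passage is routine. The boundary requirement (\ref{bdy3}) is read off from $f\in L^2(0,T;H^1_{-b/2}(B))$ via a shell-trace inequality combined with absolute continuity of the weighted integrals over $B\setminus B_r$ as $r\to\sqrt{b}$. Uniqueness under (i) is then immediate: the difference $w=f_1-f_2$ lies in the same weighted class and may be tested against $w\rho^{-b/2}$, giving $\tfrac{d}{dt}\|w\|_{L^2_{-b/2}}^2 \leq C\|w\|_{L^2_{-b/2}}^2$ and hence $w\equiv 0$.

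For part (ii) the direct weighted test is no longer legitimate, because $f_0\in L^2_{loc}$ provides no a priori global weighted control. I plan instead to localize: for each $r<\sqrt{b}$ pick a smooth radial cutoff $\chi_r\in C^\infty_c(B)$ with $\chi_r=1$ on $B_{r-\epsilon}$ and supported in $B_r$, and test (\ref{2.2}) applied to $w=f_1-f_2$ against $\varphi = w\chi_r\rho^{-b/2}$, after a Steklov-type mollification in time to pair $\p_t w$ correctly. The resulting identity yields the usual energy on $B_{r-\epsilon}$ plus shell integrals supported in $\{\chi_r'\neq 0\}$; sending $\epsilon\to 0$ first and then $r\to\sqrt{b}$, each shell contribution is estimated by $\|wd^{-1}|_{\p B_r}\|_{L^2(\p B_r)}$ (and its gradient analogue), which vanishes because both $f_1$ and $f_2$ satisfy (\ref{bdy3}). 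What remains is a Gronwall inequality on $\|w\|_{L^2_{-b/2}(B)}^2$ with zero initial data, giving $w\equiv 0$.

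The main obstacle is (ii). With only $L^2_{loc}$ initial data no a priori weighted global estimate exists, and the whole argument hinges on showing that the shell integrals produced by the radial cutoff genuinely decay in the limit $r\to\sqrt{b}$ --- which is precisely the content of the sharp boundary condition (\ref{bdy3}), and is the reason the threshold $b>2$ appears. Verifying that these boundary contributions really reduce to $O(\|wd^{-1}|_{\p B_r}\|_{L^2(\p B_r)})$, without picking up additional singular powers of $\rho^{-1}$, is the technical heart of the argument.
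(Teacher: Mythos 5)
Your transformation $g = f\rho^{-b/2}$ is a legitimate alternative to the one the paper uses, $w = f/\rho$. The paper deliberately chooses $w = f/\rho$ because it produces a weight exponent $\beta = 2 - b/2 < 1$, so the Ne\v cas trace theorem (Lemma~\ref{lem1}) applies directly to $\oc H{}^1_\beta$; the paper then proceeds through an auxiliary non-homogeneous $U$-problem, Galerkin approximation, and a Banach fixed-point iteration in $C([0,\tau];L^2_\beta)$. Your symmetric energy identity combined with a Hardy inequality is close in spirit (and uses the equivalence $\rho^{b/2}H^1_{b/2}\cong H^1_{-b/2}$ that the paper proves in Section~2.3), so the existence half of (i) is a genuinely different but workable route. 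However, your uniqueness argument for (i) presupposes that an arbitrary weak solution in the sense of Definition~\ref{def1} ``lies in the same weighted class,'' which is not part of Definition~\ref{def1}: that definition only demands $f\in L^2(0,T;H^1(B'))$ on compacta $B'\Subset B$ plus the trace condition \eqref{bdy3}. The paper closes this gap by arguing (Section~5) that \eqref{bdy3} forces the transformed $w$ into $\oc H{}^1_\beta$, making the FPF and $W$-problems equivalent; your proposal does not address this step.

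The more serious issue is part (ii), where you have overlooked the paper's reduction and proposed something that does not work. In the paper, (ii) is an immediate corollary of (i): if $f_1, f_2$ are two weak solutions with $f_0 \in L^2_{loc}$, the difference $f_1 - f_2$ is a weak solution with initial datum $0$, which does belong to $L^2_{-b/2}$; uniqueness from (i) (equivalently, estimate \eqref{cont} with $f_0 = 0$) then forces $f_1 - f_2 \equiv 0$. No new localization argument is needed. Your proposed localization, moreover, has a concrete technical failure: testing against $\varphi = w\chi_r\rho^{-b/2}$ introduces the singular weight $\rho^{-b/2}$ into the shell integrals. On the shell $B^\varepsilon$ one has $\rho \sim \varepsilon$, so the error terms coming from $\nabla\chi_r$ look schematically like $\int_{B^\varepsilon}\rho^{-b/2}\bigl(w^2/\varepsilon^2 + |w||\nabla w|/\varepsilon\bigr)\,dm$, with an extra factor $\varepsilon^{-b/2}$ beyond what appears in the unweighted conservation argument of Section~2.1. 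Condition \eqref{bdy3} controls only $\|wd^{-1}\|_{L^2(\partial B_r)}$, i.e.\ a quantity of order $w/\varepsilon$ on the shell; it says nothing about $w^2\rho^{-b/2}/\varepsilon^2$. So these boundary contributions do not reduce to $O(\|wd^{-1}|_{\partial B_r}\|_{L^2(\partial B_r)})$ as you hope, and Gronwall cannot close. Once (i) is in hand, the correct move for (ii) is precisely the one you should not try to avoid: apply (i) to the difference.
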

\begin{proof}The proof of $(i)$ will be done in  Section 3 - 5. In order to prove $(ii)$ we assume that $f_1, f_2$ are two weak solutions
of the problem with arbitrary initial data $f_0(m)$. Then $f_1-f_2$
solves \eqref{n4} with zero initial data which is in $L^2_{-b/2}$.
From \eqref{cont} in $(i)$ it follows  that $f_1 \equiv f_2$ in
$L^2(0,T;H^1_{-b/2})$.
\end{proof}
\noindent We remark that the restriction on $b$ in \eqref{condition b}
is essential to obtain the energy estimate \eqref{cont}.
%}
%We point out that the well-posedness of the weak
%solution for $b\leq 2$ was discussed in \cite{Ma08} with a different
%function space.

The weak solution thus obtained is indeed a probability density. More precesely we have the following.
\begin{prop}\label{prop+}  Let $f$ be a weak solution to \eqref{n4}, \eqref{initial}, and \eqref{n6} defined in Definition 1 subject to condition
\eqref{ini}.  Then,
\begin{equation}\label{con}
\int_B f(t, m)dm=\int_B f_0(m) dm, \quad \forall t>0.
\end{equation}
Furthermore if $f_0(m) \geq 0$ a.e. on $B$,   then $f(t,\cdot) \geq 0$ a.e. on $B$ for all $t>0$.
\end{prop}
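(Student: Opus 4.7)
I would prove the proposition in two parts: conservation of mass and non-negativity.

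\textbf{Conservation.} The plan is to test the weak formulation \eqref{2.2} against a radial cutoff $\psi_\epsilon\in C_c^1(B)$ approximating the constant function $1$: take $\psi_\epsilon(m)=\chi(|m|)$ with $\chi\equiv 1$ on $[0,\sqrt b-2\epsilon]$, $\chi\equiv 0$ on $[\sqrt b-\epsilon,\sqrt b]$, and $|\chi'|\leq C/\epsilon$. Substituting $\varphi=\psi_\epsilon$ and integrating in time yields
$$
\int_B [f(t)-f_0]\psi_\epsilon\,dm=\int_0^t\!\!\int_{A_\epsilon}\Bigl[f\kappa m-\tfrac{bfm}{2\rho}\Bigr]\cdot \nabla\psi_\epsilon\,dm\,ds-\tfrac12\int_0^t\!\!\int_{A_\epsilon}\nabla f\cdot \nabla\psi_\epsilon\,dm\,ds,
$$
where $A_\epsilon=\mathrm{supp}(\nabla\psi_\epsilon)$ is an annulus of width $\epsilon$ on which $d\sim \epsilon$ and $\rho\sim \epsilon$. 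The LHS converges to $\int_B[f(t)-f_0]\,dm$ by dominated convergence, since $f(t),f_0\in L^1(B)$ (the former by the Remark following Definition \ref{def1}, the latter by Cauchy--Schwarz and $\rho^{b/2}\in L^1(B)$ when $b>2$).

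I then show each RHS term vanishes as $\epsilon\to 0$. The transport and FENE drift terms are controlled using $|f|\leq 2\epsilon\,|f|/d$ on $A_\epsilon$ together with $b|m|/(2\rho)\leq C/d$; after the layer-cake decomposition $\int_{A_\epsilon}|f|/d\,dm=\int_{\sqrt b-2\epsilon}^{\sqrt b-\epsilon}\int_{\p B_r}|f|/d\,dS\,dr$, both reduce to a bound of the form $C\int_{\sqrt b-2\epsilon}^{\sqrt b-\epsilon}\|f/d\|_{L^2(\p B_r)}\,dr$, which tends to $0$ for a.e.\ $s$ by \eqref{bdy3}. A dominator in $s$ (needed for dominated convergence in time) is provided by the $L^2$-trace estimates coming from $f\in L^2(0,T;H^1_{loc}(B))$. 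The diffusion term is bounded by Cauchy--Schwarz and $\rho\leq C\epsilon$ on $A_\epsilon$:
$$
\Bigl|\int_{A_\epsilon}\nabla f\cdot \nabla\psi_\epsilon\,dm\Bigr|\leq \tfrac{C}{\epsilon}|A_\epsilon|^{1/2}\|\nabla f\|_{L^2(A_\epsilon)}\leq C\,\epsilon^{(b-2)/4}\,\|\nabla f\|_{L^2_{-b/2}(B)},
$$
which vanishes precisely because $b>2$; integrating in $s$ and using the energy estimate \eqref{cont} completes the argument.

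\textbf{Non-negativity.} I adapt a Stampacchia-type truncation, localized away from $\p B$. Fix $\chi\in C_c^\infty(B)$ and take $\varphi=-f_-\chi$ in \eqref{2.2} (extended by density to $H^1$ test functions with compact support, legitimate since $f\in L^2(0,T;H^1_{loc})$), where $f_-=\max(-f,0)$. Using $\nabla f_-=-\nabla f\cdot \mathbf{1}_{\{f<0\}}$, $-f_-\p_tf=\tfrac12\p_t(f_-)^2$, and integration by parts of the FENE drift against $\nabla(f_-)^2$, the weak formulation yields
$$
\tfrac12\tfrac{d}{dt}\!\int_B (f_-)^2\chi\,dm+\tfrac12\!\int_B |\nabla f_-|^2\chi\,dm\leq C(\chi)\!\int_B (f_-)^2\chi\,dm,
$$
where $C(\chi)$ is finite because $\mathrm{dist}(\mathrm{supp}\,\chi,\p B)>0$ keeps the $1/\rho$ and $1/\rho^2$ coefficients bounded on $\mathrm{supp}\,\chi$. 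Gr\"onwall combined with $f_-(0)\equiv 0$ (from $f_0\geq 0$) forces $f_-(t)\equiv 0$ on $\mathrm{supp}\,\chi$; letting $\chi$ exhaust $B$ yields $f\geq 0$ everywhere.

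\textbf{Main obstacle.} The delicate step is handling the singular FENE drift $bm/(2\rho)$ near $\p B$ in the conservation part: \eqref{bdy3} is calibrated exactly to make the $|f|/d$-type contributions vanish in the limit, and the condition $b>2$ is what forces the diffusion cutoff error $\epsilon^{(b-2)/4}$ to zero. The sharpness of both requirements is what motivates and is consistent with Proposition \ref{prop2}.
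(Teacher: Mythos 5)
For conservation your route is essentially the paper's --- test against a radial cutoff, isolate the annulus $A_\epsilon$, and invoke \eqref{bdy3} --- except that you work directly with the weak solution $f$, whereas the paper first mollifies the data to $f_{0,l}=\eta_l*f_0\in C^\infty(B)$, works with the resulting classical solution $f_l$ (so that $\p_t f_l$ is pointwise bounded on compacta), and passes to the limit using \eqref{cont}. Your shortcut is fine in spirit, but two details need tightening. The drift term does not reduce to $C\int_{\sqrt b-2\epsilon}^{\sqrt b-\epsilon}\|f/d\|_{L^2(\p B_r)}\,dr$: since $|\nabla\psi_\epsilon|\leq C\epsilon^{-1}$ and $b|m|/(2\rho)\leq Cd^{-1}$ already supplies the $d^{-1}$, the correct bound is $\frac{C}{\epsilon}\int_{\sqrt b-2\epsilon}^{\sqrt b-\epsilon}\|f/d\|_{L^2(\p B_r)}\,dr\leq C\sup_{r\in[\sqrt b-2\epsilon,\sqrt b-\epsilon]}\|f/d\|_{L^2(\p B_r)}$, which still vanishes by \eqref{bdy3}, so the conclusion survives but the displayed intermediate expression is off by a factor $\epsilon^{-1}$. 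More seriously, the dominator in $s$ cannot come from $f\in L^2(0,T;H^1_{loc})$; $H^1_{loc}$ gives no trace control as $r\to\sqrt b$. One must use $f\in L^2(0,T;H^1_{-b/2})$ from \eqref{cont}: writing $w=f/\rho\in\Hb$ and applying the weighted trace theorem in Lemma~\ref{lem1} (here $\beta=2-b/2<1$) gives $\|f/d\|_{L^2(\p B_r)}\leq C\|w\|_{L^2(\p B_r)}\leq C\|w\|_{H^1_\beta}$ uniformly for $r$ near $\sqrt b$, and $s\mapsto\|w(s,\cdot)\|_{H^1_\beta}$ lies in $L^2(0,T)\subset L^1(0,T)$.

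The non-negativity argument has a genuine gap, and it is precisely here that the paper takes a different route. Testing with $\varphi=-f_-\chi$ and integrating by parts the drift and transport terms produces contributions driven by $\nabla\chi$, for instance $\int_B f_-^2\,\kappa m\cdot\nabla\chi\,dm$ and $\int_B f_-^2\,m\cdot\nabla\chi\,\rho^{-1}\,dm$; these are controlled by $C(\chi)\int_{\mathrm{supp}\nabla\chi}f_-^2\,dm$ but \emph{not} by $C(\chi)\int_B f_-^2\chi\,dm$, since $\chi$ is small exactly where $\nabla\chi$ is supported. The differential inequality you write is therefore false as stated and Gr\"onwall does not close; replacing $\chi$ by $\chi^2$ or enlarging $C(\chi)$ does not help (this is the standard obstruction to purely local parabolic energy estimates). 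The paper sidesteps this entirely: it mollifies the data, applies the change of variables $f_l=w_l\rho^{b/2-\alpha}e^{Kt}$ of \cite{LiLi08} with $\alpha<b/2-1$ and $K$ chosen so that the resulting zero-order coefficient $c(m)$ is negative, and invokes the classical parabolic maximum principle for $w_l$ on $[0,T]\times B_r$; the requirement $b/2-\alpha>1$ rules out a negative minimum on the lateral boundary as $r\to\sqrt b$, leaving only the initial slice where $f_{0,l}\geq 0$, and one finishes by letting $l\to\infty$ via \eqref{cont}. If you insist on an energy method, you would need a \emph{global} weighted test function such as $-f_-\rho^{-b/2}$ (legitimized through the $w=f/\rho$ formulation and the density of $C^1_c(B)$ in $\Hb$), in which case the diffusion and FENE drift combine into the nonnegative quadratic $\tfrac12\rho^{-b/2}\bigl|\nabla f_-+bm\rho^{-1}f_-\bigr|^2$ and the $\kappa$-transport leaves a $\rho^{-b/2-1}$ weight to be absorbed via \eqref{2.27}; but that is a genuinely different argument from the local cutoff you proposed.
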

This proposition  will be proved in Section 2.

The following proposition states that boundary condition \eqref{n6}
is sharp for the uniqueness of the weak solution.
\begin{prop}\label{prop2}Assume \eqref{condition b} and $\kappa(t)\in C[0, T]$.
If boundary condition \eqref{n6} fails, that is,
\begin{equation*}
fd^{-1}\left|_{\p B}\right. \neq 0
\end{equation*}
is assumed, then the Fokker-Planck equation \eqref{n4} with
$f_0(m)=0$ has infinitely many solutions.
\end{prop}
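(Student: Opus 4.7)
The plan is to produce a single nontrivial weak solution $f_\star$ of \eqref{n4}--\eqref{initial} with $f_0\equiv 0$ that violates \eqref{n6}; by linearity the scalar family $\{c f_\star : c\in\R\setminus\{0\}\}$ then supplies infinitely many distinct solutions, the trace $f_\star d^{-1}|_{\p B}$ merely scaling by $c$. A routine perturbation argument handles the smooth interior drift $\nabla\cdot(\kappa m f)$, so I illustrate the construction in the case $\kappa\equiv 0$; the general $\kappa(t)\in C[0,T]$ produces only bounded, smooth corrections in the interior and does not affect the singular behavior at $\p B$ that drives the non-uniqueness.

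The preparatory step exhibits a stationary solution $h$ with $h\,d^{-1}|_{\p B}\neq 0$. Writing $L[h]:=\tfrac12\nabla\cdot(\nabla h+\tfrac{bm}{\rho}h)$ and seeking a radial $h(m)=H(|m|)$, the equation $L[h]=0$ reduces to a first-order linear ODE with integrating factor $\rho^{-b/2}$; quadrature produces the two-parameter family
\begin{equation*}
H(r)=\rho(r)^{b/2}\Bigl[\,C_1 + C_2\!\!\int_{r_0}^{r}\! s^{1-n}\rho(s)^{-b/2}\,ds\,\Bigr].
\end{equation*}
The first basic solution is (a multiple of) the equilibrium $\rho^{b/2}$; for $b>2$ the second behaves as $H(r)\sim c(\sqrt b-r)$ as $r\to\sqrt b$, so the corresponding $h(m)$ lies in $L^1(B)\cap H^1_{loc}(B)$, satisfies $L[h]=0$ classically in $B$, and has $h\,d^{-1}|_{\p B}=c\neq 0$.

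With $h$ in hand, I perform a Tychonoff-style construction. Set
\begin{equation*}
f_\star(t,m)=\sum_{k=0}^\infty g^{(k)}(t)\,a_k(m),\qquad g(t)=e^{-1/t^2}\ \text{for }t>0,\quad g(t)=0\ \text{for }t\le 0.
\end{equation*}
Substituting into $\p_tf_\star=L[f_\star]$ and matching coefficients of $g^{(k)}$ yields $L[a_0]=0$ together with the recursion $L[a_{k+1}]=a_k$ for $k\ge 0$. I set $a_0:=h$ and generate $a_1,a_2,\ldots$ iteratively by solving each first-order ODE obtained via the same integrating factor $\rho^{-b/2}$, fixing the integration constants so that every $a_k$ is smooth in the open ball $B$; inspection of the explicit integral representation shows that the $d$-type decay at $\p B$ persists for each $a_k$.

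The principal technical step—and the main obstacle—is to show that the formal series represents a genuine weak solution in the sense of Definition \ref{def1}(1)--(3). Concretely, one needs bounds of the form $\|a_k\|_{H^1(B')}\le C_{B'}^{\,k}/k!$ on every $B'$ with $\ol{B'}\subset B$, matched against the classical Tychonoff estimate $|g^{(k)}(t)|\le C^k(2k)!\,t^{-2k}$ together with the fact that $g^{(k)}(0)=0$ for every $k$, to give locally uniform convergence on $(0,T]\times B$ with vanishing initial trace. The delicate point is that although each $a_k$ remains of order $d$ at $\p B$, its coefficient grows factorially under each application of $L^{-1}$, so the boundary-weighted norms of $a_k$ inherit factorial growth that must be exactly offset by the Tychonoff smallness of $g^{(k)}$ near $t=0$. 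Once this balance is verified, $f_\star\in C^\infty((0,T]\times B)$ solves \eqref{n4} classically in the interior, hence verifies \eqref{2.2}--\eqref{2-3} with $f_\star(0,\cdot)=0$, while the $k=0$ term dominates the boundary trace for small $t>0$, giving $f_\star d^{-1}|_{\p B}=c\,g(t)+o(g(t))\neq 0$ and completing the proof.
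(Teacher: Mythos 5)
Your overall strategy differs from the paper's. The paper parametrizes nontrivial solutions by traces $g\in W^{2,\infty}((0,T)\times B)$ with $g(0,\cdot)=0$ and $g|_{\p B}\neq 0$, and for each such $g$ solves a homogeneous Dirichlet problem for $w=f/\rho-g$ in the weighted space $\oc H{}^1_\gamma$ with $\max\{\beta,-1\}<\gamma<1$ using the Galerkin and fixed-point machinery of Sections~4--5; infinitely many solutions then come from infinitely many admissible $g$. You instead propose to exhibit a single nontrivial solution via a Tychonoff series and then scale. That is a legitimate idea in principle, but the construction as written has genuine gaps.

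First, the seed function $h$ is not in $H^1_{loc}(B)$ for $n\geq 2$. Your second basic radial solution $H(r)=\rho(r)^{b/2}\bigl[C_1+C_2\int_{r_0}^r s^{1-n}\rho(s)^{-b/2}\,ds\bigr]$ does have the linear decay $H(r)\sim c(\sqrt b-r)$ at the boundary, but it is singular at the origin: since $\rho^{b/2}$ is a positive constant near $r=0$, the integral forces $H(r)\sim -\log r$ for $n=2$ and $H(r)\sim r^{2-n}$ for $n\geq 3$, so $\nabla H\notin L^2$ on any neighborhood of $0$. Hence $h\notin H^1(B')$ for $B'\ni 0$, and $a_0=h$ already fails item~(1) of Definition~\ref{def1} in the physically relevant dimensions. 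Patching $h$ near the origin destroys $L[h]=0$ there and leaves an inhomogeneity that would have to be solved away, which is essentially the boundary-value problem the paper addresses head on.

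Second, the convergence of $\sum_k g^{(k)}(t)a_k(m)$ is asserted rather than proved, and the bounds you yourself quote do not close. With $\|a_k\|_{H^1(B')}\leq C^k/k!$ and $|g^{(k)}(t)|\leq C^k(2k)!\,t^{-2k}$, the $k$-th term is of order $(C^2t^{-2})^k(2k)!/k!$, and since $(2k)!/k!=(k+1)\cdots(2k)\geq (k+1)^k>k!$, the series diverges for every $t>0$. The classical heat-equation Tychonoff example works precisely because $a_k=x^{2k}/(2k)!$ decays like $1/(2k)!$, not $1/k!$: a full factor $(2k)!$ is needed to tame $g^{(k)}$. Whether inverting $L$ here gives $1/(2k)!$-type bounds uniformly on compacta, starting from a function singular at both the origin and the boundary, is exactly the analytic content you would have to supply; noting that the "balance must be verified" does not establish it.

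Third, the recursion $L[a_{k+1}]=a_k$ requires a time-independent spatial operator, so the reduction to $\kappa\equiv 0$ is not a side issue. For $\kappa=\kappa(t)\in C[0,T]$, as the Proposition assumes, $L$ varies in $t$, the coefficients $a_k$ cannot be taken time-independent, and no argument is offered for absorbing this. The paper's route avoids the difficulty altogether because its Section~4 existence machinery is already set up for $\kappa(t)$.
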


%\tb{
In other words, Proposition \ref{prop2} implies that part $(ii)$ in
Theorem \ref{thm2} would fail if boundary requirement \eqref{n6}
were weaken so  that near boundary the distribution approaches zero
not faster than the distance function.
%}
%\begin{equation*}
%f d^{-1+\varepsilon}|_{\p B}=0,
%\end{equation*}
%for any $\varepsilon>0$.}

The justification of sharpness follows from the existence of a
 Cauchy-Dirichlet problem for $w$ defined by
\begin{equation}\label{transformation}
w=\frac{f}{\rho}-g
\end{equation}
with $g$ being a class of functions properly constructed.

 This article is organized as follows.
In Section 2, we prove Proposition \ref{prop+} and provide some
preliminaries including: (1) several transformations used to handle
the boundary difficulty, (2) equivalence of two weighted function
spaces,   and (3) the relation of our  boundary condition to the
natural flux boundary condition. In Section 3, we transform the
Fokker-Planck equation to certain Cauchy-Dirichlet problem, named as
$W$-problem, and define a weak solution of $W$-problem in a weighted
Sobolev space. The well-posedness of the $W$-problem is shown in
Section 4 by the Galerkin method and the Banach fixed point theorem.
This leads to the well-posedness of the Fokker-Planck equation,
Theorem \ref{thm2}; details of the proof are presented in Section 5.
In Section 6, we construct non-trivial solutions for the
Fokker-Planck equation described in Proposition \ref{prop2}.

\section{Preliminaries}
\subsection{Probability density }
With the definition of our weak solution given in Definition 1 we shall
show that $f$ has the usual properties of a probability density
function (i.e. it is non-negative and has a unit integral over $B$
for all $t>0$ if it is so initially) -- this is to prove  Proposition \ref{prop+}.

Given $f_0$ in $L^2_{-b/2}(B)$ and $f_0 \geq 0$ a.e. on $B$, we
define $ f_{0, l}=\eta_l* f_0 \in C^\infty(B) $ for $l \geq 1$. Here
$\eta_l(m)=l^d\eta(lm)$ denotes the usual scaled mollifier.   We have
$$
\lim_{l \to \infty}  \|f_{0, l}-f_0\|_{L^2_{-b/2}(B)} =0, \quad f_{0, l}\geq 0.
$$
Suppose that $f_l$ is the weak solution of \eqref{n4},
\eqref{initial}, and \eqref{n6} subject to initial condition
$f_l(0,m)=f_{0,l}(m)\in C^\infty(B)$. Then, for any $T>0$  and $0<t<T$,
\begin{eqnarray}
\left|\int_B f(t,m) -f_l(t,m) dm \right| &\leq& C \max_{0\leq t\leq
T}
\|f(t,\cdot)-f_l(t,\cdot)\|_{L^2_{-b/2}(B)} \\ \notag
&\leq& C \|f_0-f_{0,l}\|_{L^2_{-b/2}(B)}. \label{ff}
\end{eqnarray}
Hence for justification of the conservation of polymers,  it suffices  to prove that
\begin{equation}\label{nco}
\int_B f_l(t,m) dm =\int_{B} f_l(0, m)dm, \quad \forall t\geq 0.
\end{equation}
To do so, we take a test function $\varphi_\varepsilon \in C^\infty_c(B)$
converging to $\chi_B$ as $\varepsilon \to0$ such that
\begin{equation*}
\varphi_\varepsilon(m)=\left\{
       \begin{array}{ll}
         1, & |m| \leq \sqrt b- \varepsilon \\
         0, & |m| \geq \sqrt b-  \varepsilon/2
       \end{array}
     \right.
\end{equation*}
and
\begin{equation}\label{cutoff}
|\p_{m_i} \varphi_\varepsilon| < C \frac{1}{\varepsilon},\quad
|\p_{m_i}\p_{m_j} \varphi_\varepsilon| < C \frac{1}{\varepsilon^2}.
\end{equation}
From (\ref{2.2})
%$$ \int_{B} \left[ \p_tf \varphi_\varepsilon  - f \kappa m \cdot \nabla \varphi_\varepsilon +
%\frac{b f m \cdot \nabla \varphi_\varepsilon}{2\rho} + \frac12
%\nabla f \cdot\nabla \varphi_\varepsilon \right] dm =0. $$
and the fact that derivatives of $\varphi_\varepsilon$ are supported
in $B^\varepsilon: = B_{\sqrt b-\varepsilon/2}\setminus B_{\sqrt
b-\varepsilon}$, we have
\begin{equation}\label{dtf}
\int_B \p_t f_l \varphi_\varepsilon dm = \int _{B^\varepsilon}
\left[f_l \kappa m \cdot \nabla \varphi_\varepsilon - \frac{b f_l m
\cdot \nabla \varphi_\varepsilon}{2\rho} - \frac12 \nabla f_l
\cdot\nabla \varphi_\varepsilon \right] dm.
\end{equation}
%Thus the integral on the left converges to $\ds\int_B  f dm$ as
%$\varepsilon \to 0$.
 Applying the mean value theorem of the form
$$
\int_{B^\varepsilon }g dm = \frac{\varepsilon}{2}\int_{\partial
B_r}g dS, \quad \text{for~some~}r \in (\sqrt{b}-\varepsilon,
\sqrt{b}-\varepsilon/2),
$$
to the first term on the right of (\ref{dtf}) together with
\eqref{cutoff}, we obtain
$$
\left|\frac{\varepsilon}{2} \int_{\p B_{r}} f_l\kappa m\cdot \nabla
\varphi_\varepsilon dS\right| \leq C \int_{\p B_{r}} |f_l| dS \leq C
\|f_ld^{-1} \|_{L^2(\p B_r)}.
$$
Similarly the second term on the right of (\ref{dtf}) is bounded by
$$
\int_{\p B_{r}} \left|\frac{f_l}{\rho}\right|dS \leq C
\|f_ld^{-1}\|_{L^2(\p B_r)}.
$$
It follows from  \eqref{bdy3} that the above two
upper bounds converge to zero as $\varepsilon \to 0$ .

Integration by parts in the last term in (\ref{dtf}) yields
\begin{eqnarray*}
  \left|\int_{B^\varepsilon} \nabla f_l \cdot\nabla \varphi_\varepsilon d m \right| & \leq &
\int_{B^\varepsilon} |f_l \Delta \varphi_\varepsilon| d m + \int_{\p
B^\varepsilon} |f_l \nabla
\varphi_\varepsilon | dS \\
 &= & \frac{\varepsilon}{2}\int_{\partial B_r}  |f_l\Delta \varphi_\varepsilon| d S + \int_{\p
B^\varepsilon} |f_l \nabla
\varphi_\varepsilon | dS \\
 &\leq & C \int_{\p B_r \cup \p B^\varepsilon} \frac{|f_l|}{\varepsilon}dS,
\end{eqnarray*}
which, in virtue of $ |f_l|/\varepsilon \leq |f_l|d^{-1} $ on $\p
B_r \cup \p B^\varepsilon$, is converging to zero as $\varepsilon
\to 0$ as well.

Due to Theorem \ref{thm2} and the initial condition $f_{0,l} \in
C^\infty(B)$, it follows that $\p_t f_l$ is bounded in any $B_{r}$
for $0<r<\sqrt b$. Thus, for any $\tau,s >0$
\begin{eqnarray*}
\left|\int_B f_l (\tau,m) \varphi_\varepsilon dm - \int_B f_l(s,m)
\varphi_\varepsilon dm \right| &=& \left| \int_s^\tau \frac{d}{d t}
\left(\int_B
f_l(t,m) \varphi_\varepsilon dm \right) dt \right|\\
 &=& \left| \int_s^\tau
 \int_B \p_t f_l(t,m) \varphi_\varepsilon dm dt \right|.
\end{eqnarray*}
Using the estimate for $\int_B \p_t f_n(\tau,m) \varphi_\varepsilon
dm$ together with the boundedness of $f_l(t,m)$, we can send
$\varepsilon$ to zero to obtain (\ref{nco}) as  claimed.

We now turn to justify  the positivity.  Consider the transformation
introduced in \cite{LiLi08}
\begin{equation}\label{fw}
f_l=w_l\rho^{b/2-\alpha}e^{Kt}.
\end{equation}
Then $w_l$ solves
\begin{equation}
\rho^2\p_t w_l -\frac12 \rho^2 \Delta w_l - \rho
[\frac{4\alpha-b}2m-\rho \kappa m] \cdot \nabla w_l - c(m) w_l=0,
\end{equation}
where
$$
c(m)=- K\rho^2  + \alpha [nb + (2\alpha  + 2 - n - b)|m|^2 ] +
(b-2\alpha)\rho m \cdot \kappa m.
$$
Then for any $\overline{B_r} \subset B$, $w_l$ is a classical
solution in $(0,T]\times B_r$.
It was shown in \cite{LiLi08}  that there exist  $\alpha < b/2-1$ and $K$ so that
$c(m)<0$.
%\begin{equation}\label{alpha}
%\alpha < b/2-1, \quad \text{or}~ b/2-\alpha > 1.
%\end{equation}
 The maximum principle yields  that
$w_l$ can not achieve  a negative minimum at the interior points of
$[0,T]\times B_r$. Thus the negative minimum of $w_l$, if it exists,
can only be attained on the parabolic boundary of the domain.

From the transformation \eqref{fw} and the condition $b/2-\alpha >
1$, it follows that the negative minimum of $f_l$, if any, can only  be attained at the initial time. Therefore
\begin{equation*}
 f_l \geq -\max  f_{0,l}^- \geq 0.
\end{equation*}

Now fix $t$. For any $x_0$ and $\eta$ such that $B(x_0;\eta) \subset
B$,
\begin{equation*}
-\int_{B(x_0;\eta)} f dm + \int_{B(x_0;\eta)} f_l dm \leq
\int_{B(x_0;\eta)} |f-f_l| dm \leq \int_B |f-f_l| dm \leq  C
||f_0-f_{0,l}||_{L^2_{-b/2}}.
\end{equation*}
Here (\ref{ff}) has been used to obtain the last inequality.  Hence
% \int_{B(x_0,\eta)} f>
%\int_{B(x_0,\eta)}f_n -\varepsilon > -\varepsilon
%\end{eqnarray*}
\begin{equation*}
\int_{B(x_0;\eta)} f dm \geq \int_{B(x_0;\eta)} f_l dm
-C||f_{0,l}-f_0||_{L^2_{-b/2}},
\end{equation*}
which as $l\to\infty$ leads to
\begin{equation*}
\int_{B(x_0,\eta)} f\geq 0.
\end{equation*}
Since $x_0$ and $\eta$ are arbitrary, $f\geq 0$ almost everywhere on $B$ for $t>0$.  The proof of Proposition \ref{prop+} is now complete.

%In passing we note that the initial condition $f_0 \in L^2_{-b/2}(B)$ can be
%weaken to $L^1(B)$.

\subsection{Transformations }
%In terms of our notation $\rho=b-|m|^2 \sim d $, the equilibrium
%state, up to a normalization factor, can be expressed as
%$$
%M=\rho^{b/2}.
%$$
To overcome the difficulty caused by the boundary singularity,
several transformations  have been introduced in literature. With
boundary condition (\ref{n6}),  in this work we introduce
$$
w=\frac{f}{\rho}
$$
to transform the Fokker-Planck equation to a degenerate parabolic equation with zero boundary
condition (see details in Section 3).  A widely accepted transformation is the ratio of the unknown to the equilibrium solution, i.e.,
$$
w=\frac{f}{\rho^{b/2}}.
$$
Such a transformation was used in \cite{LiLi08} to reformulate the Fokker-Planck equation, and examine whether
a Dirichlet type boundary condition is necessary.

A third transformation  is
$$
w=\frac{f}{\rho^{b/4}}.
$$
This was used in  \cite{DqLcYp05, Guo03} to remove the singularity
at the boundary in the resulting equation. It was
also used in \cite{KS09} to formulate a weak formulation of $w$ for
discretization using a spectral Galerkin approximation.

%This way the
%usual zero boundary condition for $w$ reflects a boundary
%requirement
%\begin{equation}\label{fdb4}
%\lim_{d \to 0} fd^{-b/4} \to 0.
%\end{equation}
Another  transformation defined by
$$
w=\frac{f}{\rho^s}
$$
with $b\geq 4s^2/(2s-1)$ and $s>1/2$ is said to also lead to a well-posed
problem. The minimum value of the function $4s^2/(2s-1)$ is attained
at $s=1$, yielding the maximum range of $b$ values, $b\geq 4$. This
transformation was proposed in \cite{CL04, CL04+} in the special
case $s=2$ and $s=2.5$, where these values were chosen on the basis
of numerical experiments in two and three dimensions, respectively.
We note that our transformation $w=f/\rho$ corresponds to $s=1$, but
not limited by $b\geq 4$.

\subsection{Weighted Sobolev spaces} In contrast to the standard weighted Sobolev space $H^1_{-b/2}(B)$ used in this work,
the following weighted function space
\begin{equation*}
\rho^{b/2} H^1_{b/2}(B):= \left\{\phi: \quad \frac{\phi}{\rho^{b/2}}
\in H^1_{b/2}(B)
%\int_B |\phi|^2 \rho^{-b/2} + |\nabla
%\frac{\phi}{\rho^{b/2}}|^2\rho^{b/2} dm <\infty
\right \}
\end{equation*}
is well known in literature for Fokker-Planck equations with FENE
potentials, see e.g. \cite{BaSchSu05, JLLO06, BaSu07, LiMa07, Ma08,
KS09}. We now show their equivalence as long as $b >2$.

The key estimate we need to prove the equivalence is the embedding
theorem stated in Lemma \ref{lem1}.
%that is for any $\mu \not=1$,
%\begin{equation}\label{hl}
%H^1_\mu(B)  \hookrightarrow  L^2_{\mu-2}(B).
%\end{equation}
Set  $\psi=\phi\rho^{-b/2}$. If $\phi \in H^1_{-b/2}(B)$, we use the
relation
$$
\nabla \psi= \frac{\nabla \phi}{\rho^{b/2}} -\frac{2m}{\rho^{b/2 +1}}\phi.
$$
It is obvious that
$$
\frac{\nabla \phi}{\rho^{b/2}} \in L^2_{b/2}(B).
$$
Also  the use of  Lemma \ref{lem1} and the fact that
$H^1_{-b/2}(B)= \oc H{}^1_{-b/2}(B)$ for $b>2$ (see \cite{Ku85})
give
$$
\left \| \frac{\phi}{\rho^{b/2 +1}}  \right \|_{L^2_{b/2}} =\|\phi\|_{L^2_{-b/2-2}}\leq C \|\phi\|_{H^1_{-b/2}}.
$$
Hence  $\phi \in  \rho^{b/2} H^1_{b/2}(B)$.  If $\phi \in \rho^{b/2}
H^1_{b/2}(B)$ we use the following identity
$$
\nabla \phi=\rho^{b/2}\nabla \psi +2m \rho^{b/2-1}\psi.
$$
It is easy to see that $\rho^{b/2}\nabla\psi \in L^2_{-b/2}$; also
for $b>2$  we have
$$
\|\rho^{b/2-1}\psi\|_{L^2_{-b/2}}=\|\psi\|_{L^2_{b/2-2}}\leq C \|\psi\|_{H^1_{b/2}}
$$
by Lemma \ref{lem1}.  Thus $\phi \in H^1_{-b/2}(B)$. These together
verify that $\rho^{b/2} H^1_{b/2}$ and  $H^1_{-b/2}$ are equivalent
when $ b >2$.

\subsection{Boundary conditions}
Granted certain smoothness of $f$, e.g. $f\in C^1(\bar B)$,  one may
argue that our boundary condition (\ref{n6})  is equivalent to the
zero flux boundary condition (\ref{bdy}).

Set  $\nu=\frac{m}{|m|}$ and $g=fd^{-1}$.  We calculate the flux
\begin{align*}
J& :=\left( \frac{bmf}{2\rho}-\kappa m f +\frac{1}{2}\nabla
f\right)\cdot \frac{m}{|m|}\\
&=\frac{b|m|}{2(|m|+\sqrt{b})}\frac{f}{d}+\frac{1}{2} \frac{\partial
f}{\partial \nu}-|m|\nu\cdot \kappa \nu f.
\end{align*}
Due to singularity on boundary it is necessary that $f|_{\partial
B}=0$. For any point $p\in
\partial B$, let $m$ be a point in $B$ such that $m+d\nu=p$. Then
$$
\frac{\partial f}{\partial \nu}(p)=-\lim_{d \to 0} \frac{f(m)}{d}.
$$
We thus have
$$
J(p)= \lim_{d\to 0}J(m)=  \frac{1}{4}(b-2)\lim_{d \to 0} \frac{f(m)}{d}.
$$
For $b\not=2$, this implies that $J(p)=0$ if and only if
$$
\lim_{d \to 0} \frac{f(m)}{d}=0.
$$
%The boundary condition imposed in \cite{KS09} is of the form $
%f=o(\sqrt{M}) \quad d \to 0 $, i.e. (\ref{fdb4}). If $b\geq 4$, all
%above three conditions ensure zero flux flowing through the
%boundary. But condition (\ref{fdb4}) for $2<b<4$ is weaker than our
%boundary requirement, which is shown sharp for the underlying
%problem to be well-posed.

\section{Transformation of the microscopic FENE model}
In what follows we shall call the Fokker-Planck equation (\ref{n4}) with initial condition (\ref{initial}) and boundary condition (\ref{n6}) as the Fokker-Planck-FENE (FPF) problem. We first formulate a time evolution equation from the FPF problem. Define $w(t,m)$  as
\begin{equation}\label{2-6}
f(t,m)=w(t,m)\rho.
\end{equation}
Then \eqref{n4} is transformed to
\begin{equation}\label{2-7}
\rho\left[\p_tw -\frac12 \Delta w -\frac{(b-4)m-2\rho \kappa m
}{2\rho} \cdot \nabla w -\frac{c}{\rho} w\right]=0,
\end{equation}
where
\begin{equation}\label{c(m)}
 c(t,m)= 2 m \cdot \kappa(t) m +   n (b/2-1).
\end{equation}
Setting a parameter
\begin{equation*}
\beta=-\frac{b}{2}+2,
\end{equation*}
we rewrite \eqref{2-7}  as
\begin{equation*}\label{2-8}
\rho^{b/2-1} \left[  \p_tw\rho^\beta -\frac12\nabla\cdot( \nabla
w\rho^\beta) + \kappa m\cdot \nabla w \rho^\beta -  c w
\rho^{\beta-1}\right]=0.
\end{equation*}

\noindent The boundary condition \eqref{n6} implies that
$w(t,\cdot)$ satisfies a homogeneous boundary condition for almost
all $t$ since the distance function $d$ and $\rho$ are equivalent
(see \eqref{2.31}).

The FPF problem is formally transformed to the following $W$-problem:
\begin{eqnarray}
&\dis \p_tw\rho^\beta-\frac12\nabla\cdot(\nabla w\rho^\beta ) +
\kappa m\cdot \nabla w \rho^\beta - c w \rho^{\beta-1}=0,& \quad
(t,m)\in (0,T] \times
B,\label{2.11}\\
&\dis w(0,m)=w_0(m),& \quad m \in B,\label{2.12}\\
&\dis w(t,m)=0,&\quad (t,m)\in [0,T]\times \p B.\label{2.13}
\end{eqnarray}
Here,
\begin{equation*}
w_0(m)= f_0(m)\rho^{-1}
\end{equation*}
according to the transformation \eqref{2-6}.

In order to define a weak solution of $W$-problem we introduce a weighted Sobolev space $H^1(\Omega; \sigma)$ for a nonnegative measurable function $\sigma$ as a set of measurable function $\phi$ such that
\begin{equation*}
||\phi||^2_{H^1(\Omega;\sigma)}:= \int_{\Omega} (|\nabla \phi|^2 + \phi^2)\sigma dm <\infty.
\end{equation*}
Similarly, a weighted $L^2(\Omega; \sigma)$ can be defined. $\overset\circ H{}^1(\Omega; \sigma)$ denotes a completion of $C^\infty_c (\Omega)$ with $||\cdot||_{H^1(\Omega; \sigma)}$. It is obvious that
$H^1(\Omega;\sigma)$ and $\oc H{}^1(\Omega;\sigma)$ are Hilbert spaces with the inner product $\langle\cdot,\cdot\rangle_{H^1(\Omega; \sigma)}$ defined as
\begin{equation*}
\langle\phi_1,\phi_2\rangle_{H^1(\Omega; \sigma)}=\int_\Omega (\nabla \phi_1 \cdot \nabla \phi_2  + \phi_1\phi_2)\sigma dm
\end{equation*}
and
\begin{equation*}
\oc H{}^1(\Omega; \sigma)\subset H^1(\Omega;\sigma).
\end{equation*}

\noindent For notational convenient, we use $H^1_\mu(\Omega), \oc
H{}^1_{\mu}(\Omega)$ and $L^2_\mu(\Omega)$ for $H^1(\Omega;
\rho^\mu), \oc H{}^1(\Omega; \rho^\mu)$ and $L^2(\Omega; \rho^\mu)$
respectively. We also omit the domain $\Omega$ if it is obvious.

\begin{lem}\label{lem1}
Suppose that $\Omega=B$.
\begin{enumerate}
  \item[(1)] If $\phi \in \oc H{}^1_\mu$ for $\mu <1$, then
    \begin{equation}\label{2.27}
      ||\phi||_{L^2_{\mu-2}} \leq C_0 ||\phi||_{H^1_\mu}.
    \end{equation}
      If $\mu>1$, we have the same inequality for $\phi \in H{}^1_\mu$
  \item[(2)] If $\phi \in H^1_{\mu}$ for $\mu<1$, then the trace map
    \begin{eqnarray*}
       \T: &H^1_{\mu}(\Omega)& \to L^2(\p \Omega)\\
          &\phi & \mapsto \phi|_{\p \Omega}
    \end{eqnarray*}
    is well defined, i.e. it is a bounded linear map.\\
    In particular, for $\phi \in \oc H{}^1_{\mu}$
   \begin{equation}\label{2.23}
    \T (\phi)=0.
   \end{equation}
   \end{enumerate}
\end{lem}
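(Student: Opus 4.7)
The lemma asserts a pair of weighted Hardy and trace estimates on the ball $B = B(0,\sqrt b)$. Because $\rho(m) = (\sqrt b - |m|)(\sqrt b + |m|)$ is comparable to $2\sqrt b\, d(m,\partial B)$ in a boundary neighborhood and bounded below by a positive constant on any compact subset of $B$, the weight $\rho^\mu$ is essentially a power of the distance to the boundary. The plan is to localize to a boundary collar, pass to polar coordinates, and reduce each estimate to a classical one-dimensional weighted inequality on radial segments.

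\textbf{Reduction.} Fix $r_0 \in (0,\sqrt b)$ and split $B = B_{r_0} \cup (B \setminus B_{r_0})$. On the interior ball every weight $\rho^\alpha$ is pinched between two positive constants, so both claims reduce to standard unweighted $H^1$-theory. On the shell, writing $m = r\omega$ and $s := \sqrt b - r$, one has $\rho \simeq 2\sqrt b\,s$ and $r^{n-1}$ pinched, so everything becomes a 1D problem in $s \in (0, s_0)$ with parameter $\omega \in S^{n-1}$, where $s_0 := \sqrt b - r_0$.

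\textbf{Part (1), Hardy.} For smooth $\phi$ set $g(s) = \phi((\sqrt b - s)\omega)$. Integration by parts using $s^{\mu-2} = \partial_s\bigl(s^{\mu-1}/(\mu-1)\bigr)$ gives
\begin{equation*}
\int_0^{s_0} g^2 s^{\mu-2}\,ds = \left[\frac{s^{\mu-1}}{\mu-1}\, g^2\right]_0^{s_0} - \frac{2}{\mu-1}\int_0^{s_0} g g'\,s^{\mu-1}\,ds.
\end{equation*}
For $\mu < 1$ with $\phi \in C_c^\infty(B) \subset \oc H{}^1_\mu$, the function $g$ vanishes near $s=0$ so the boundary limit at $0$ is zero, the boundary value at $s=s_0$ carries a favorable sign and may be dropped, and a Young-inequality absorption on the cross term produces $\int_0^{s_0} g^2 s^{\mu-2}\,ds \leq \tfrac{4}{(1-\mu)^2}\int_0^{s_0} (g')^2 s^\mu\,ds$. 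For $\mu > 1$ with $\phi \in H^1_\mu$, the factor $s^{\mu-1}$ vanishes at $s=0$ and disposes of that boundary term automatically, while the boundary value at $s=s_0$ is a trace on the interior sphere $\partial B_{r_0}$, controlled by $\|\phi\|_{H^1_\mu(B)}$ via the standard (unweighted) trace theorem. In either regime, integrating over $\omega \in S^{n-1}$, using $|g'|^2 = |\partial_r\phi|^2 \leq |\nabla\phi|^2$ and $s \simeq \rho$ on the shell, and combining with the trivial interior estimate, yields \eqref{2.27}. A density argument (with lower semicontinuity of $\|\cdot\|_{L^2_{\mu-2}}$ against $L^2$-convergence) extends the estimate from the relevant smooth subspace to all of $\oc H{}^1_\mu$ (respectively $H^1_\mu$).

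\textbf{Part (2), trace.} The main ingredient is the 1D trace inequality for $\mu < 1$: starting from $g(0) = g(s) - \int_0^s g'(t)\,dt$ and using Cauchy--Schwarz with the integrable weight $t^{-\mu}$ (integrable precisely because $\mu < 1$), one obtains
\begin{equation*}
g(0)^2 \leq C\int_0^{s_0}\bigl(g(s)^2 + g'(s)^2\bigr)\,s^\mu\,ds.
\end{equation*}
Applying this along each ray with $g(s) = \phi((\sqrt b - s)\omega)$, integrating in $\omega \in S^{n-1}$, and using $s \simeq \rho$ on the shell together with $r^{n-1}$ pinched yields $\|\T\phi\|^2_{L^2(\partial B)} \leq C\|\phi\|^2_{H^1_\mu(B)}$ for smooth $\phi$. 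Density of smooth functions in $H^1_\mu$ for $\mu < 1$ extends $\T$ to a bounded operator on $H^1_\mu$. For $\phi \in \oc H{}^1_\mu$, approximation by $C_c^\infty(B)$-functions (whose trace is zero) and continuity of $\T$ give \eqref{2.23}.

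\textbf{Main obstacle.} The delicate point is the different mechanism by which the $s=0$ boundary term in part (1) is annihilated in the two regimes: for $\mu < 1$ it is the vanishing of $\phi$ at $\partial B$ (the $\oc H{}^1_\mu$-hypothesis) that controls the divergent factor $s^{\mu-1}$, while for $\mu > 1$ the weight $s^{\mu-1}$ itself vanishes at $\partial B$ and no condition on $\phi$ is required. The Hardy constants $4/(1-\mu)^2$ and $4/(\mu-1)^2$ both diverge as $\mu \to 1^\pm$, reflecting that $\mu = 1$ is critical for this family of inequalities; maintaining a uniform constant in the approximation argument, and verifying that the sign-favorable boundary terms in the integration-by-parts identity can genuinely be dropped, is where the proof requires care.
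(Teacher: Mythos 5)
Your proof takes a genuinely different route from the paper's. The paper's own proof is essentially a citation: it invokes the Ne\v{c}as--Kufner theory of weighted Sobolev spaces with distance-power weights to get the embedding $\oc H{}^1(\Omega;d^\mu)\hookrightarrow L^2(\Omega;d^{\mu-2})$ and the trace estimate for $0\le\mu<1$, then transports both statements to the $\rho$-weighted spaces via $\sqrt b\, d\le\rho\le 2\sqrt b\, d$, and for $\mu<0$ uses the trivial comparison $\|\phi\|_{H^1}\le b^{-\mu/2}\|\phi\|_{H^1_\mu}$ to reduce to the standard trace theorem. You instead give a from-scratch derivation by localizing to a boundary collar, passing to polar coordinates, and reducing each estimate to a one-dimensional weighted Hardy or trace inequality along radial segments. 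Your approach is more self-contained and pedagogically transparent (one literally sees where the criticality at $\mu=1$ and the constants $4/(1-\mu)^2$ come from), whereas the paper's two-line proof is shorter but opaque without the cited references. Both are valid proofs of the same statement.

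That said, two points in your write-up deserve scrutiny. First, for $\mu>1$ you carry out the integration by parts for smooth-up-to-the-boundary $\phi$ and then ``extend by density'' to all of $H^1_\mu$. Density of $C^\infty(\overline B)$ (or of $C_c^\infty(B)$) in $H^1_\mu$ for $\mu>1$ is itself a nontrivial fact from exactly the weighted-Sobolev-space literature the paper cites, and the most natural attempt to prove it by cutting off near $\partial B$ produces the error term $\|\phi\,\nabla\chi_\varepsilon\|_{L^2_\mu}\lesssim\|\phi\|_{L^2_{\mu-2}}$, i.e., it quietly \emph{uses} the Hardy inequality you are trying to prove; one must instead use a radial dilation $\phi(\lambda\,\cdot)$ (which works because $\mu>0$) to avoid the circularity. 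As written, your proof leaves this loop open. Second, the one-dimensional trace inequality $g(0)^2\le C\int_0^{s_0}(g^2+(g')^2)s^\mu\,ds$ is stated for all $\mu<1$, but the averaging step you sketch (integrating the pointwise bound against the weight $s^\mu$) only closes when $\mu>-1$, since otherwise $\int_0^{s_0}s^\mu\,ds$ diverges; for $\mu\le -1$ one should either evaluate at a single interior $s=s_0$ and invoke the standard interior trace on $\partial B_{r_0}$, or observe that the stated $1$D inequality is vacuous in that range because the right-hand side is infinite whenever $g(0)\ne0$. Neither issue is fatal, but both are precisely the kind of ``care'' your final remark flags, and they are worth spelling out before the argument can be called complete.
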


\begin{proof}
In \cite{Ne62}(see also \cite{Ku85}), it was proved that
\begin{equation*}
\oc H{}^1(\Omega; d^\mu)\hookrightarrow L^2(\Omega; d^{\mu-2})
\end{equation*}
provided $\p\Omega$ is Lipschitz continuous. Recall that $d$ denotes the distance from $m$ to the boundary of $\Omega$.
\eqref{2.27} follows from
\begin{equation}\label{2.31}
\sqrt b d \leq \rho \leq 2 \sqrt b d.
\end{equation}

\noindent It is also known that the trace map $\mathcal{T}$ is well
defined for $0\leq \mu<1$ (\cite{Ne62,Ku85}). For $\mu<0$
\begin{equation*}
||\phi||_{H^1} \leq b^{-\mu/2} ||\phi||_{H^1_\mu}
\end{equation*}
since $\rho^\mu \geq b^\mu $ for all $m\in B$.
Therefore, $\T$ is well defined for $\mu<1$. \eqref{2.23} is obvious from the definitions of the trace map and $\oc H{}^1_\mu$.
\end{proof}

We now define a weak solution to $W$-problem in a standard
manner. Multiplication by a test function $\varphi\in
C^1_c(B)$ to the equation \eqref{2.11} and integration over $B$
yield
\begin{equation*}\label{2.26}
\int_{B}\left[ \p_tw \varphi \rho^\beta  + \frac12  \nabla w
\cdot \nabla \varphi \rho^\beta  +   \kappa m\cdot \nabla w
\varphi \rho^\beta  - c w \varphi {\rho^{\beta-1}}
\right]dm=0.
\end{equation*}
 This equation is well defined assuming that $\p_tw(t,\cdot) \in (\Hb)^*$, the dual space of $\Hb$, and $w(t,\cdot), \varphi \in \Hb$ due to the boundedness of $c$ and Lemma \ref{lem1}. Moreover,
\begin{equation*}
\Hb \subset L^2_{\beta}
\end{equation*}
implies
\begin{equation*}
\Hb \subset L^2_{\beta} \subset (\Hb)^*.
\end{equation*}
Thus
\begin{equation*}
 w(t,x) \in C([0,T]; L^2_\beta).
\end{equation*}
Here we identify $L^2_\beta$ with its dual space.

Let $(\cdot,\cdot)_H$ denote the paring of a Hilbert space $H$ with its dual space $H^*$ and
\begin{equation}\label{ww}
\bf L[ w,\varphi;t]=\frac12 \int_{B} \nabla w(t,m) \cdot\nabla \varphi \rho^\beta dm+ \int_{B} \kappa m\cdot \nabla w(t,m) \varphi \rho^\beta dm.
\end{equation}
We now describe the weak solution we are looking for.

\begin{defn}\label{defn1}
A function $w(t,m)$ such that
\begin{equation*}
 w(t,m) \in L^2(0,T; \Hb), ~with~  \p_tw(t,m) \in L^2(0,T; (\Hb)^*)
\end{equation*}
is a weak solution of $W$-problem, \eqref{2.11}-\eqref{2.13}, provided
\begin{enumerate}
  \item[(1)] For each $\varphi\in \Hb$ and almost every $0\leq t\leq
  T$,
  \begin{equation*}\label{2.24}
  \dis ( \p_tw(t,\cdot), \varphi)_{\Hb} + \bf L[w,\varphi;t]= \int_{B} c  w(t,m) \varphi \rho^{\beta-1}dm,
  \end{equation*}
   \item[(2)] $\dis w(0,m)=w_0(m)$ in $L^2_\beta$ sense. i.e.
  \begin{equation*}\label{2.25}
   \int_{B} |w(0,m)-w_0(m)|^2 \rho^\beta dm =0.
  \end{equation*}
\end{enumerate}
\end{defn}

The following energy estimate for $\bf{L}[w,w;t]$ for fixed $t$ is crucial.
%can
%be achieved from a simple modification of energy estimates for the
%bilinear form in elliptic equations, see \cite{Ev98} for details.
\begin{lem}\label{lem2}
There exist positive constants $C_1$ and $C_2$ depending only on $b$
and $||\kappa||_{L^\infty(0,T)}$ such that
\begin{equation*}
C_1 ||w(t,\cdot)||^2_{H_\beta^1} \leq \bf L[w,w;t] + C_2 ||w(t,\cdot)||^2_{L^2_\beta}.
\end{equation*}
\end{lem}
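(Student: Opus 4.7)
\medskip

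\noindent\textbf{Proof proposal.} The plan is to show this is a standard G\aa{}rding-type inequality for the bilinear form $\mathbf{L}[\,\cdot\,,\,\cdot\,;t]$. First I would substitute $\varphi=w$ in \eqref{ww} to split
\begin{equation*}
\mathbf{L}[w,w;t]=\frac{1}{2}\int_{B}|\nabla w|^2\rho^\beta\,dm+\int_{B}(\kappa m\cdot\nabla w)\,w\,\rho^\beta\,dm=:I_1+I_2.
\end{equation*}
The term $I_1$ is exactly $\tfrac{1}{2}\|\nabla w(t,\cdot)\|^2_{L^2_\beta}$, which is the principal contribution we need; the whole game is to absorb the sign-indefinite lower-order term $I_2$ into it plus a multiple of the $L^2_\beta$ norm.

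For $I_2$, I would use $|m|\le\sqrt{b}$ on $B$ together with the bound $|\kappa(t)|\le\|\kappa\|_{L^\infty(0,T)}=:M$ (available since $\kappa\in C[0,T]$) to get
\begin{equation*}
|I_2|\le M\sqrt{b}\int_{B}|\nabla w|\,|w|\,\rho^\beta\,dm\le M\sqrt{b}\Bigl(\varepsilon\,\|\nabla w\|^2_{L^2_\beta}+\tfrac{1}{4\varepsilon}\|w\|^2_{L^2_\beta}\Bigr)
\end{equation*}
by Cauchy--Schwarz followed by Young's inequality with a small parameter $\varepsilon>0$. Choosing $\varepsilon$ so that $M\sqrt{b}\,\varepsilon\le\tfrac{1}{4}$ yields
\begin{equation*}
\mathbf{L}[w,w;t]\ge \tfrac{1}{4}\|\nabla w\|^2_{L^2_\beta}-C\|w\|^2_{L^2_\beta},
\end{equation*}
with $C$ depending only on $b$ and $M$.

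To convert the gradient estimate on the right-hand side into the full $H^1_\beta$ norm, I would add $\tfrac{1}{4}\|w\|^2_{L^2_\beta}$ to both sides, obtaining
\begin{equation*}
\mathbf{L}[w,w;t]+\bigl(C+\tfrac{1}{4}\bigr)\|w\|^2_{L^2_\beta}\ge \tfrac{1}{4}\bigl(\|\nabla w\|^2_{L^2_\beta}+\|w\|^2_{L^2_\beta}\bigr)=\tfrac{1}{4}\|w(t,\cdot)\|^2_{H^1_\beta},
\end{equation*}
which gives the claim with $C_1=\tfrac{1}{4}$ and $C_2=C+\tfrac{1}{4}$. No integration by parts and no use of Lemma \ref{lem1} are needed for this particular estimate; there is no real obstacle, since the weight $\rho^\beta$ plays a passive role (both terms of $\mathbf{L}$ carry exactly the same weight). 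The only delicate point to remember is that the $\varepsilon$--splitting of $I_2$ must leave a fraction of $I_1$ intact, which is why one cannot simply choose $\varepsilon$ absorbing all of $\tfrac{1}{2}\|\nabla w\|^2_{L^2_\beta}$.
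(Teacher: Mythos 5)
Your proposal is correct and fills in exactly the details the paper leaves implicit: the paper's proof says only ``set $\varphi=w$ in \eqref{ww} and apply the Schwarz inequality,'' and your expansion (bound $|\kappa m|\le \sqrt{b}\,\|\kappa\|_{L^\infty}$, Cauchy--Schwarz in $L^2_\beta$, Young with a small parameter, absorb into $I_1$) is precisely how that one-line argument unpacks. Your observations that the same weight $\rho^\beta$ appears throughout and that no integration by parts or use of Lemma~\ref{lem1} is required are also accurate.
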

\begin{proof}  Let $\phi=w$ in (\ref{ww})  and apply the Schwarz inequality
we arrive at the above estimate as desired.
\end{proof}
The well-posedness  of the W-problem is  stated in the following
\begin{lem}\label{lem3}
$W$-problem, \eqref{2.11}-\eqref{2.13}, is uniquely solvable in weak sense for $w_0 \in L^2_\beta$. Furthermore,
\begin{equation*}\label{Wcont}
\max_{0\leq t\leq T} ||w(t,\cdot)||_{L^2_\beta} +
||w||_{L^2(0,T;\Hb)} + ||\p_tw||_{L^2(0,T;(\Hb)^*)} \leq
C||w_0||_{L^2_\beta}.
\end{equation*}
\end{lem}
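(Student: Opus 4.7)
The plan is to prove existence by a Galerkin approximation in the weighted space $\Hb$, extract a weak limit using uniform a priori estimates powered by Lemma \ref{lem2}, and derive uniqueness from the linearity of the equation together with the same energy estimate. The Banach fixed-point theorem enters to handle the singular lower-order term $\int c\,w\,\varphi\,\rho^{\beta-1}\,dm$, whose weight is strictly more singular than the diffusion weight $\rho^\beta$.

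First, I would pick a Hilbert basis $\{\psi_k\}_{k\geq 1}$ of $L^2_\beta$ whose elements lie in $\Hb$ (for instance the eigenfunctions of the weighted Dirichlet operator $-\rho^{-\beta}\nabla\cdot(\rho^\beta\nabla\cdot)$ on $B$), and look for Galerkin approximations $w^N(t,m)=\sum_{k=1}^N d_k^N(t)\psi_k(m)$ determined by testing \eqref{2.11} against each $\psi_j$ for $j=1,\dots,N$, with initial data $d_k^N(0)=(w_0,\psi_k)_{L^2_\beta}$. Since $\kappa\in C[0,T]$, this is a linear ODE system with continuous coefficients, hence uniquely solvable on $[0,T]$.

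The core of the argument is a uniform energy estimate. Testing the Galerkin equation with $w^N$ and using Lemma \ref{lem2} to control $\mathbf{L}[w^N,w^N;t]$, the only remaining issue is the lower-order integral. Writing
\begin{equation*}
\left|\int_B c\,(w^N)^2\,\rho^{\beta-1}\,dm\right|\leq \|c\|_{L^\infty}\,\|w^N\|_{L^2_\beta}\,\|w^N\|_{L^2_{\beta-2}},
\end{equation*}
the Hardy-type embedding $\|\phi\|_{L^2_{\beta-2}}\leq C_0\|\phi\|_{\Hb}$ from Lemma \ref{lem1}(1)---valid precisely because $\beta<1\Leftrightarrow b>2$---combined with Young's inequality allows me to absorb a small multiple of $\|w^N\|^2_{\Hb}$ into the coercivity. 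Gronwall's inequality then yields
\begin{equation*}
\max_{0\leq t\leq T}\|w^N(t)\|^2_{L^2_\beta}+\|w^N\|^2_{L^2(0,T;\Hb)}\leq C\|w_0\|^2_{L^2_\beta},
\end{equation*}
and the bound on $\p_t w^N$ in $L^2(0,T;(\Hb)^*)$ follows by reading $\p_t w^N$ off the weak formulation using duality and the orthogonal projection onto the first $N$ basis vectors. Extracting a weakly convergent subsequence and upgrading to strong convergence in $L^2(0,T;L^2_\beta)$ via Aubin--Lions compactness $\Hb\hookrightarrow L^2_\beta\hookrightarrow(\Hb)^*$ (the first embedding being compact), I can pass to the limit in every term of the weak form; the continuity $w\in C([0,T];L^2_\beta)$ then recovers the initial condition.

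The main obstacle is exactly the singular lower-order weight $\rho^{\beta-1}$: because it dominates $\rho^\beta$ near $\p B$, every coercivity and convergence estimate must route through the Hardy inequality of Lemma \ref{lem1}(1), which borderlines at $\beta=1$, i.e.\ $b=2$. This is also where the Banach fixed-point theorem provides a clean alternative construction: freeze $w$ in $c\,w\,\rho^{\beta-1}$ as a given source $g$, solve the resulting linear, coercive problem by Galerkin to define a map $g\mapsto w$, and show this map is a contraction on a short interval $[0,T_0]$ with contraction constant controlled by the Hardy estimate; the solution is then extended to $[0,T]$ by concatenation. Uniqueness is immediate from linearity: the difference $w=w_1-w_2$ of two solutions satisfies the same equation with zero initial data, and testing with $w$ itself (legitimate since $w\in L^2(0,T;\Hb)$ with $\p_t w\in L^2(0,T;(\Hb)^*)$) together with Lemma \ref{lem2} and Gronwall forces $w\equiv 0$.
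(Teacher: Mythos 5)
Your proof is correct in substance, but it takes a genuinely different route from the paper's. The paper first isolates the singular lower-order term $c\,w\,\rho^{\beta-1}$ by introducing an auxiliary $U$-problem with an arbitrary inhomogeneity $h\in L^2(0,T;L^2_{2-\beta})$, proves well-posedness of that linear problem by Galerkin (Lemma \ref{thm1}), and then recovers the $W$-problem via a Banach fixed-point argument on the map $w\mapsto u$ where $u$ solves the $U$-problem with $h=c\,w\,\rho^{\beta-1}$, iterating over short time intervals. You instead run Galerkin directly on the full $W$-problem and absorb the singular term on the fly through the chain $\bigl|\int_B c\,(w^N)^2\rho^{\beta-1}\,dm\bigr|\le\|c\|_{L^\infty}\|w^N\|_{L^2_\beta}\|w^N\|_{L^2_{\beta-2}}$ followed by the Hardy embedding of Lemma \ref{lem1}(1) and Young's inequality; this is legitimate and in fact uses the same two ingredients the paper uses to estimate $\langle h,u_l\rangle_{L^2_0}$, just applied without the intermediate decoupling. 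The trade-off is modularity: the paper's two-step construction is re-used almost verbatim in Section 6, where the non-uniqueness argument requires solving a $U$-type problem in a different weight class $\Hg$ and then absorbing a nonhomogeneous source $\tilde h_0$, whereas your one-shot Galerkin would have to be re-derived there. Two small caveats: your invocation of Aubin--Lions compactness is unnecessary for this linear problem (weak $L^2(0,T;\Hb)$ convergence of $w^N$ and weak-$*$ convergence of $\partial_t w^N$ already suffice to pass to the limit in every term of the weak formulation), and if you do want to use it you should justify the compactness of $\Hb\hookrightarrow L^2_\beta$, which does not follow from the standard Rellich theorem in a weighted setting without comment. The uniqueness argument at the end is fine, though you should note explicitly that the same Hardy/Young absorption is needed there to control $\int_B c\,w^2\rho^{\beta-1}\,dm$ before Gronwall applies.
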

A detailed proof will be presented in next section.

\section{Well-posedness for the transformed problem}
In this  section, we show the well-posedness of the weak solution to
$W$-problem. For this aim, we consider the following $U$-problem
containing a non-homogeneous term $h(t,m)\in L^2(0,T;
L^2_{2-\beta})$.
\begin{eqnarray}
&\dis \p_tu\rho^\beta-\frac12\nabla\cdot( \nabla u\rho^\beta) +  \kappa m\cdot \nabla u \rho^\beta - h=0,& \quad (t,m)\in (0,T] \times
B,\label{u1}\\
&\dis u(0,m)=u_0(m),& \quad m \in B,\label{u2}\\
&\dis u(t,m)=0,&\quad (t,m)\in [0,T]\times \p B.\label{u3}
\end{eqnarray}
The weak solution of $U$-problem is defined similarly.
\begin{defn}
We say a function $u$ such that
\begin{equation*}\label{2.55-2}
 u \in L^2(0,T; \Hb), ~with~  \p_tu \in L^2(0,T; (\Hb)^*)
\end{equation*}
is a weak solution of $U$-problem provided
\begin{enumerate}
  \item[(1)] for each $\varphi\in \Hb$ and almost every $0\leq t\leq T$
  \begin{equation*}\label{2.53}
  \dis (\p_tu(t,\cdot), \varphi)_{\Hb} + \mathbf{L}[u,\varphi; t]= \int_B h(t,m)\varphi dm,
  \end{equation*}
  \item[(2)] $\dis u(0,m)=u_0(m)$ in $L^2_\beta$.
\end{enumerate}
\end{defn}
%\noindent Recall that
%\begin{equation*}
%\bf L[u,\varphi;t]=\frac12 \int_{B} \nabla u(t,m) \cdot\nabla \varphi \rho^\beta dm+ \int_{B} \kappa m\cdot \nabla u(t,m)\varphi \rho^\beta dm.
%\end{equation*}
\noindent We remark that  $\dis \int_B h(t,m)\varphi dm$ is finite for any $h(t,\cdot) \in L^2_{2-\beta}$ since
$\varphi \in L^2_{\beta-2}$ from \eqref{2.27}. Thus $\dis \int_B h(t,m)\varphi dm$ can be understood as the $L^2_0$ inner product although $h(t,\cdot)$ may not belong to $L^2_0$.

 The well-posedness for $U$-problem follows from the standard Galerkin method.
\begin{lem}\label{thm1}
For given $h\in L^2 (0,T; L^2_{2-\beta})$ and $u_0\in
L^2_\beta$, $U$-problem has a unique weak solution. Moreover,
\begin{equation}\label{2.60}
\max_{0\leq t\leq T} ||u(t,\cdot)||_{L^2_\beta} + ||u||_{L^2(0,T;\Hb)} + ||\p_tu||_{L^2(0,T;(\Hb)^*)} \leq C(||h||_{L^2 (0,T; L^2_{2-\beta})}+
||u_0||_{L^2_\beta}).
\end{equation}
\end{lem}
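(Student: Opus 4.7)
The plan is to apply the standard Galerkin method adapted to the weighted spaces $\Hb$ and $L^2_\beta$, using the coercivity estimate of Lemma \ref{lem2} and the Hardy-type inequality \eqref{2.27} to handle the degeneracy of $\rho^\beta$ at $\p B$.

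First, I would choose a countable set $\{e_k\}_{k\ge 1}\subset C^\infty_c(B)$ that is a basis of $\Hb$ and orthonormal in $L^2_\beta$ (e.g., by taking eigenfunctions of a suitable symmetric weighted elliptic operator, or by Gram--Schmidt on a countable dense subset of $C^\infty_c(B)$). Define the Galerkin approximation $u_N(t,m)=\sum_{k=1}^N d_k^N(t)\,e_k(m)$ by imposing, for $k=1,\dots,N$,
\begin{equation*}
\int_B \p_t u_N\, e_k\, \rho^\beta\, dm +\mathbf{L}[u_N,e_k;t]=\int_B h\, e_k\, dm,\qquad d_k^N(0)=\int_B u_0\, e_k\, \rho^\beta\, dm.
\end{equation*}
This is a linear ODE system with continuous coefficients for $(d_1^N,\dots,d_N^N)$; existence and uniqueness on $[0,T]$ is standard.

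Second, I would derive the uniform a priori estimate. Multiplying the $k$-th equation by $d_k^N(t)$, summing, and using the $L^2_\beta$-orthonormality of $\{e_k\}$ gives
\begin{equation*}
\frac12 \frac{d}{dt}\|u_N\|_{L^2_\beta}^2+\mathbf{L}[u_N,u_N;t]=\int_B h\,u_N\,dm.
\end{equation*}
The right-hand side is bounded, via Cauchy--Schwarz in the weighted pairing and \eqref{2.27}, by
\begin{equation*}
\|h\|_{L^2_{2-\beta}}\|u_N\|_{L^2_{\beta-2}}\le C_0\|h\|_{L^2_{2-\beta}}\|u_N\|_{\Hb},
\end{equation*}
and is absorbed into $C_1\|u_N\|_{\Hb}^2$ from Lemma \ref{lem2} by Young's inequality, at the price of a $\|u_N\|_{L^2_\beta}^2$ term. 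Grönwall's inequality then yields
\begin{equation*}
\max_{0\le t\le T}\|u_N(t)\|_{L^2_\beta}^2+\|u_N\|_{L^2(0,T;\Hb)}^2\le C\bigl(\|h\|_{L^2(0,T;L^2_{2-\beta})}^2+\|u_0\|_{L^2_\beta}^2\bigr).
\end{equation*}
For $\p_t u_N$: for any $\varphi\in\Hb$ with $\|\varphi\|_{\Hb}\le 1$, write $\varphi=\varphi^N+\varphi^{N,\perp}$ with $\varphi^N\in\mathrm{span}\{e_1,\dots,e_N\}$ and note $(\p_t u_N,\varphi)_{\Hb}=(\p_t u_N,\varphi^N)_{L^2_\beta}$; combining the Galerkin relation with continuity of $\mathbf{L}$ and the bound on $\int h\varphi^N\,dm$ yields the corresponding $L^2(0,T;(\Hb)^*)$ bound.

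Third, I would pass to the limit. By the uniform bounds, along a subsequence $u_N\rightharpoonup u$ in $L^2(0,T;\Hb)$ and $\p_t u_N\rightharpoonup \p_t u$ in $L^2(0,T;(\Hb)^*)$. Fix any $e_j$; for $N\ge j$ the Galerkin identity is linear in $u_N$ and passes to the weak limit; density of $\mathrm{span}\{e_k\}$ in $\Hb$ then gives the weak formulation for every $\varphi\in\Hb$. The energy estimate \eqref{2.60} for $u$ follows from weak lower-semicontinuity. By a standard embedding (since $u\in L^2(0,T;\Hb)$ and $\p_t u\in L^2(0,T;(\Hb)^*)$ imply $u\in C([0,T];L^2_\beta)$), the initial condition can be identified by integrating the Galerkin identity against a suitable test function vanishing at $t=T$. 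Uniqueness follows by applying \eqref{2.60} to the difference of two solutions with $h=0,\ u_0=0$.

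The main obstacle is handling the degeneracy/singularity of the weight $\rho^\beta$ at $\p B$ (since $\beta=2-b/2<1$, and $\beta<0$ when $b>4$). Two places require care: (a) making sense of the source term, which is resolved by pairing $h\in L^2_{2-\beta}$ with $\varphi\in L^2_{\beta-2}$ via the Hardy inequality \eqref{2.27} from Lemma \ref{lem1}; and (b) the zero boundary condition, which is built into $\Hb=\oc H{}^1_\beta$ through the trace statement \eqref{2.23}. Once these weighted functional-analytic ingredients are in place, the Galerkin scheme proceeds as in the unweighted parabolic case.
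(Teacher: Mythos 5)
Your proposal follows essentially the same Galerkin argument as the paper: the same basis construction, the same coercivity estimate from Lemma \ref{lem2}, the same use of the Hardy-type inequality \eqref{2.27} to pair $h\in L^2_{2-\beta}$ with $u_N\in L^2_{\beta-2}$, Gr\"onwall, and passage to the weak limit. The only cosmetic differences are that you impose $L^2_\beta$-orthonormality on the basis (the paper does not insist on this) and you spell out the $\varphi=\varphi^N+\varphi^{N,\perp}$ decomposition for the $\p_t u_N$ bound, where the paper simply refers to the standard argument in Evans.
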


\begin{proof}
 We first construct an approximate solution in a finite-dimensional space. Let $\{\phi_i\}$ be a basis of $\Hb$ and $L^2_\beta$.
  The existence of such a basis can be verified from the fact that $\Hb$ is a dense subset of $L^2_\beta$.
     Consider an approximation $ u_l (t,m) = \dis \sum_{i=1}^l d^l_i(t) \phi_i $, where $d^l_i$ satisfies
      \begin{eqnarray}
       &( \p_t u_{l}(t,\cdot), \phi_j)_{\Hb} + \mathbf{L}[u_l,\phi_j; t]= \langle
  h(t,\cdot),\phi_j\rangle_{L^2_0},\quad 1\leq j\leq l, \label{2.55}\\
       &\dis\sum_{i=1}^l d^l_i(0) \phi_i \to u_0 ~\text{in~} L^2_\beta, ~ \text{as~} l\to \infty.\label{2.56}
      \end{eqnarray}
      Since \eqref{2.55} and \eqref{2.56} form a system of linear differential equations, $\{d^l_i\}$ is uniquely determined for each $l$. We rewrite \eqref{2.55} as
      \begin{equation}\label{2.55-1}
      \langle\p_t u_{l}(t,\cdot), \phi_j\rangle_{L^2_\beta} + \mathbf{L}[u_l,\phi_j; t]=\langle h(t,\cdot),\phi_j\rangle_{L^2_0},\quad 1\leq j\leq l.
      \end{equation}
Apply $d^l_j$ to \eqref{2.55-1} and sum for $1\leq j\leq l$, then for almost every $t$
     \begin{equation*}
       \langle\p_tu_{l}(t,\cdot), u_l(t,\cdot)\rangle_{L^2_\beta} + \mathbf{L}[ u_l,u_l; t]=\langle h(t,\cdot),u_l(t,\cdot)\rangle_{L^2_0}.
\end{equation*}
From Lemma \ref{lem2}, it follows that
\begin{equation}\label{2.69}
      \frac{d}{dt}||u_l(t,\cdot)||^2_{L^2_\beta} +  2 C_1|| u_l(t,\cdot)||^2_{H^1_\beta}  \leq 2 C_2||u_l(t,\cdot)||^2_{L^2_\beta}+2 \langle
  h(t,\cdot), u_l(t,\cdot)\rangle_{L^2_0}.
     \end{equation}
From \eqref{2.27}, for any $\delta>0$
  \begin{equation*}
  |\langle h(t,\cdot), u_l(t,\cdot)\rangle_{L^2_0}| \leq \frac1{2 \delta}||h(t,\cdot)||^2_{L^2_{2-\beta}}  + \frac\delta 2 C_0^2|| u_l(t,\cdot)||^2_{H^1_\beta}.
  \end{equation*}
  With $\delta = C_1/C_0^2$, \eqref{2.69} can be rewritten as
 \begin{equation}\label{2.74}
  \frac{d}{dt}||u_l(t,\cdot)||^2_{L^2_\beta} +  C_1|| u_l(t,\cdot)||^2_{H^1_\beta}  \leq 2C_2||u_l(t,\cdot)||^2_{L^2_\beta}+ C_0^2/C_1 ||h(t,\cdot)||^2_{L^2_{2-\beta}},
 \end{equation}
 or
 \begin{equation*}\label{2.75}
 \frac{d}{dt}||u_l(t,\cdot)||^2_{L^2_\beta}  \leq  2C_2||u_l(t,\cdot)||^2_{L^2_\beta}+ C_0^2/C_1
  ||h(t,\cdot)||^2_{L^2_{2-\beta}}.
 \end{equation*}
 Use Gronwall's inequality to obtain
\begin{equation*}\label{2.76}
\max_{0\leq t\leq T}||u_l(t,\cdot)||^2_{L^2_\beta}  \leq C \left (||u_0||^2_{L^2_\beta} + || h||^2_{L^2(0,T;L^2_{2-\beta})}\right),
\end{equation*}
where $C$ is an appropriate constant which depends on $\beta$, $b$, $T$ and $|\kappa|$. On the other hand, integration of \eqref{2.74} from $0$ to $T$ together with above inequality yields
\begin{equation}\label{2.77}
||u_l||^2_{L^2(0,T;\Hb)} \leq C \left( ||u_0||^2_{L^2_\beta}+ ||h||^2_{L^2(0,T;L^2_{2-\beta})}\right).
\end{equation}
A similar argument to that in \cite{Ev98} gives us the estimate for $||\p_t u_l||$ as
\begin{equation*}\label{2.79}
|| \p_t u_{l}||^2_{L^2(0,T; (\Hb)^*)} \leq C \left( ||u_0||^2_{L^2_\beta}+ ||h||^2_{L^2(0,T;L^2_{2-\beta})}\right).
\end{equation*}
Here we have used \eqref{2.55} with $\phi \in \Hb$ such that $||\phi||_{H^1_\beta}\leq 1$ and \eqref{2.77}. By passing to the limit as $l\to\infty$ and a standard argument (e.g. see \cite{Ev98}), we have well-posedness for $U$-problem.
\end{proof}

 Now, we introduce a linear map $\A$ to connect $W$ and $U$-problems as
\begin{eqnarray*}
\A : &L^2(0,\tau;L^2_{\beta})& \to L^2(0,\tau; L^2_{2-\beta})\\
     &w&  \mapsto c  w\rho^{\beta-1}.
\end{eqnarray*}
%To have a well-defined $\A$, we choose
%\begin{equation}\label{alpha is 1}
%\alpha=1,
%\end{equation}
%which is crucial in this argument. With this $\alpha$, we rewrite
%$c(m)$ defined in \eqref{c(m)} as follows:
%\begin{eqnarray*}
%c(m)&=& 2\rho m \cdot \kappa m +  n\rho(b/2-1), \\
%&\df& \tilde c(m) \rho.
%\end{eqnarray*}
Since $c$ is bounded,
\begin{eqnarray*}
||  c w(t,\cdot)\rho^{\beta-1}||^2_{L^2_{2-\beta}} &\leq& ||
c||^2_{L^\infty}\int_B w^2(t,\cdot)
\rho^{2\beta-2}\rho^{2-\beta} dm,\\
&=& ||c||^2_{L^\infty} \int_B w^2(t,\cdot) \rho^{\beta} dm.
\end{eqnarray*}
Thus, $\A$ is well defined and
\begin{equation*}
||\A(w_1) -\A(w_2)||^2_{L^2(0,T;L^2_{2-\beta})} \leq ||
c||^2_{L^\infty} ||w_1-w_2||^2_{L^2(0,T;L^2_{\beta})}.
\end{equation*}

\noindent We define another map $\F$ such that
\begin{eqnarray*}
\F: & C([0,\tau]; L^2_{\beta})& \to C([0,\tau]; L^2_{\beta})\\
    & w& \mapsto  u.
\end{eqnarray*}
Here, $\F (w)$ is given by the weak solution of $U$-problem with
\begin{equation*}
 h = \A (w),
\end{equation*}
and the initial condition
\begin{equation*}\label{2.111}
u_0(m)=w(0,m).
\end{equation*}

\noindent The map $\F$ is well defined from Lemma \ref{thm1} and the
definition of $\A$. Now we show that $\F$ is a contraction mapping
for sufficiently small $\tau$. Let
\begin{equation*}
u_1=\F(w_1), \quad u_2= \F(w_2).
\end{equation*}
From the energy estimate \eqref{2.60},
\begin{eqnarray*}
||u_1- u_2||^2_{C([0,\tau]; L^2_{\beta})} &\leq & C ||\A(w_1)- \A(w_2)||^2_{L^2(0,\tau;L^2_{2-\beta})} \\
&= & C \int_0^\tau || \A(w_1)(t,\cdot)-\A(w_2)(t,\cdot)||^2_{L^2_{2-\beta}} dt\\
&\leq & C \int_0^\tau || w_1(t,\cdot)-  w_2(t,\cdot)||^2_{L^2_\beta} dt\\
&= & C\tau || w_1- w_2||^2_{C([0,\tau]; L^2_{\beta})}.
\end{eqnarray*}
Thus, $\F$ has a unique fixed point $w$ in $C([0,\tau];
L^2_{\beta})$ and $w$ solves $W$-problem in a weak sense in
$(0,\tau]\times B$, if $C\tau<1$. We are able to continue this procedure to obtain the global
well-posedness for the above constant $C$ is
independent of $\tau$.

For the fixed point $w$, \eqref{2.60} and the boundedness of $\A$ imply
that for $t\in [0, \tau']$
\begin{align*}
 \max_{0\leq t\leq \tau'} ||w(t,\cdot)||_{L^2_\beta} & + ||w||_{L^2(0,\tau';\Hb)} + ||\p_tw||_{L^2(0,\tau';(\Hb)^*)}\\
\quad \leq& C||\A(w)||_{L^2 (0,\tau'; L^2_{2-\beta})}+ C||w_0||_{L^2_\beta}\\
\qquad \leq& C\tau' \max_{0\leq t\leq \tau'} ||w(t,\cdot)||_{L^2_\beta}+C||w_0||_{L^2_\beta}.
\end{align*}
We select a small $\tau'<T$ such that $C\tau' <1$. Then
\begin{equation*}
\max_{0\leq t\leq \tau'} ||w(t,\cdot)||_{L^2_\beta} + ||w||_{L^2(0,\tau';\Hb)} + ||\p_tw||_{L^2(0,\tau';(\Hb)^*)} \leq  C||w_0||_{L^2_\beta}.
\end{equation*}
Thus,
\begin{equation*}
||w(\tau',\cdot)||_{L^2_\beta} \leq C||w_0||_{L^2_\beta}
\end{equation*}
and
\begin{eqnarray*}
\max_{\tau'\leq t\leq 2\tau'} ||w(t,\cdot)||_{L^2_\beta} + ||w||_{L^2(\tau',2\tau';\Hb)} + ||\p_tw||_{L^2(\tau',2\tau';(\Hb)^*)} &\leq&  C||w(\tau',\cdot)||_{L^2_\beta}\\
&\leq& C^2||w_0||_{L^2_\beta}.
\end{eqnarray*}
Continuing, after finitely many steps we obtain an energy estimation similar to \eqref{2.60}.  The proof of Lemma \ref{lem3}  is thus complete.

\section{Well-posedness for the FPF problem}
In Section 2, we transformed the FPF problem to W-problem formally,
but it is not difficult to show that they are equivalent. Indeed,
one can verify that boundary condition \eqref{n6} in the sense of
\eqref{bdy3} for the FPF problem is equivalent to the null boundary
condition for $W$-problem.

For any test function $\varphi \in
C^1_c$,  the weak solution formulation for $f$ can be transformed to the weak solution formulation
for $w$, with $\varphi \rho^{b/2-1}$ as the test function.  This is valid since  $\varphi \rho^{b/2-1} \in C^1_c$  is dense in $\Hb$.  Such a justification
can be reversed, hence  the FPF problem  and $W$-problem are equivalent.

%Let
%\begin{equation*}
%\mathfrak{L}_1(f)=0,\quad \mathfrak{L}_2(w)=0
%\end{equation*}
%denote the Fokker-Planck equation \eqref{n4} and \eqref{2.11} in the
%$W$-problem, respectively. For any test function $\varphi \in
%C^1_c$,
%%\begin{equation*}
%\int_B \mathfrak{L}_1(f) \varphi dm =0 \Leftrightarrow \int_B \mathfrak{L}_2(w) \varphi \rho^{b/2-1} dm =0.
%\end{equation*}
%Since $\varphi \rho^{b/2-1}$ is in $C^1_c$ which is dense in $\Hb$, the FPF problem  and $W$-problem are equivalent.

Now we seek the function space in which the weak solution $f$ to the FPF problem  belongs. Recall that $\beta=-b/2+2$. For fixed $t\in [0,T]$, \eqref{2.27} implies
\begin{eqnarray}
\int_B |f|^2 \rho^{-b/2} dm &=& \int_B |w|^2 \rho^{\beta} dm,\label{4.1}\\
\int_B (|f|^2 + |\nabla f|^2) \rho^{-b/2} dm &\leq& C \int_B (|w|^2 +
|\nabla w|^2) \rho^{\beta}dm. \label{4.2}
\end{eqnarray}
Also, for $\phi \in H^1_{-b/2}$ we have
\begin{equation}\label{4.3}
|(\p_t f, \phi)_{H^1_{-b/2}}| = |(\p_t w,
\rho^{-1}\phi)_{H^1_\beta}|\leq C \|\p_t w\|_{(H^1_{\beta})^*}
\|\phi\|_{H^1_{-b/2}}.
\end{equation}
 The estimate of the weak solution,
\eqref{cont} follows from Lemma \ref{lem3} together with
\eqref{4.1}-\eqref{4.3}. This finishes the proof of (i) of Theorem \ref{thm2}.

\section{Non-uniqueness}
In this section we show that (\ref{n6}) is sharp in the sense that more
solutions can be constructed if a weaker condition is imposed ---  this is to prove Proposition \ref{prop2}.

It suffices to construct more than one solution to the Fokker-Planck
equation with $f_0(m)=0$ and the assumption
\begin{equation}\label{bdy2}
||fd^{-1}\left|_{\p B_r}\right.||_{L^2(\p B_r)} \not =0  \text{~ as~}
r\to \sqrt b ~\text{~for~} t\in I.
\end{equation}
Here $I$ is a nonzero measurable set.  The idea is to consider  a
class of functions  $g(t,m)\in W^{2,\infty}((0,T)\times B)$ such
that $g(0,m)=0$ and $g(t,m)|_{\p B}\neq 0$ for $t>0$ (e.g.
$g(t,m)=t|m|^2$) and show that for each $g$ the following problem
has a solution.
%Rewrite the Fokker-Planck equation with side conditions as follows:
\begin{eqnarray}
&\dis \p_tf + \nabla\cdot(\kappa m f)=\frac12 \nabla \cdot \left(\frac{bm}{\rho} f\right) +\frac12 \Delta f,& \quad \text{in~} (0,T]\times B,\label{1}\\
&\dis f(0,m)=0,& \quad m \in B,\label{2}\\
&\dis f(t,m)\rho^{-1} =g(t, m),&\quad \text{in~} (0,T]\times \p B.\label{3}
\end{eqnarray}
% Obviously, $f\equiv0$ is a solution of \eqref{1}-\eqref{3}. Let
% \begin{equation*}
% \tilde g(t,m)=\rho g(t,m)
% \end{equation*}
% for $g(t,m)\in W^{2,\infty}((0,T)\times B)$ such that $g(0,m)=0$ and $g(t,m)|_{\p B}\neq 0$ for $t>0$
% (e.g. $g(t,m)=t|m|^2$). We will show the existence of a nontrivial
% solution $f$ which coincides with $\tilde g$ at the boundary. Note
% that $\tilde g$ satisfies \eqref{bdy2} and
% \begin{equation*}
% \tilde g d^{-1}\left|_{\p B}\right. \in L^\infty(\p B).
% \end{equation*}
% Define a function $\tilde w$ as
% \begin{equation*}
% f=\tilde w\rho.
%\end{equation*}
%Then,
%\begin{eqnarray*}
%&\p_t\tilde w\rho^\gamma -\frac12\nabla\cdot (\nabla \tilde{w} \rho^\gamma) + (\beta-\gamma)m\cdot \nabla \tilde w \rho^{\gamma-1}+ \kappa m\cdot \nabla \tilde w \rho^\gamma- c \tilde w \rho^{\gamma-1}=0, & \\
%&\tilde w(0,m)=0, \quad m\in B,& \\
%&\tilde w(t,m)=g(t,m), \quad (t,m) \in [0,T]\times \p B,&
%\end{eqnarray*}
Note that $\beta=-b/2+2<1$,  we can choose a parameter $\gamma$ such that
\begin{equation}\label{gamma}
\max\{\beta,-1\}<\gamma<1.
\end{equation}
\noindent To proceed, we define
\begin{equation*}
w= \frac{f}{\rho}- g.
\end{equation*}
The resulting equation when multiplied by $\rho^{1-\gamma}$ leads to the following
\begin{eqnarray}
&\p_tw\rho^\gamma -\frac12\nabla\cdot (\nabla w \rho^\gamma) + (\beta-\gamma)m\cdot \nabla w \rho^{\gamma-1}+ \kappa m\cdot \nabla  w \rho^\gamma - \tilde h_0=0,& \label{w11}\\
&w(0,m)=0, \quad~ m\in B,&\label{w12}\\
&w(t,m)=0, \quad~(t,m) \in [0,T]\times \p B,&\label{w13}
\end{eqnarray}
where
\begin{equation*}
\tilde h_0(t,m)= c w \rho^{\gamma-1} -\p_t g\rho^\gamma
+\frac12\nabla\cdot (\nabla g \rho^\gamma) - (\beta-\gamma)m\cdot
\nabla g \rho^{\gamma-1}- \kappa m\cdot \nabla g \rho^\gamma+
c g \rho^{\gamma-1}
\end{equation*}
with
\begin{equation*}
 c(t,m)=2 m \cdot \kappa(t) m +  n(b/2-1).
\end{equation*}
Let
\begin{eqnarray*}
\A_0 : &L^2(0,\tau;L^2_{\gamma})& \to L^2(0,\tau; L^2_{2-\gamma})\\
     &w&  \mapsto  \tilde h_0.
\end{eqnarray*}
This is well defined since $\gamma>-1$ from  \eqref{gamma} and the assumption that $g \in W^{2,\infty}((0,T)\times B)$. From the same argument as that in Section 4, it follows that \eqref{w11}-\eqref{w13} has a unique
solution $w$ such that
\begin{equation*}
w(t,m) \in L^2(0,T; \Hg), \quad  \p_tw(t,m) \in L^2(0,T; (\Hg)^*),
\end{equation*}
provided the corresponding $U$-problem
\begin{eqnarray}
&\p_tu\rho^\gamma -\frac12\nabla\cdot (\nabla u \rho^\gamma) + (\beta-\gamma)m\cdot \nabla u \rho^{\gamma-1}+ \kappa m\cdot \nabla  u \rho^\gamma - h_0=0,& \label{u11}\\
&u(0,m)=0, \quad~ m\in B,&\label{u12}\\
&u(t,m)=0, \quad~(t,m) \in [0,T]\times \p B,&\label{u13}
\end{eqnarray}
has a solution for any $h_0\in L^2(0,T;L^2_{2-\gamma})$. Note that
$\gamma<1$ is essential in order that the trace of $w$ at the boundary is defined. Equation \eqref{u11} is of the form of \eqref{u1} but with an additional term  $(\beta-\gamma)m\cdot \nabla u \rho^{\gamma-1}$. We thus define
\begin{equation*}
\bf L_0[ u,\varphi;t]\df\frac12 \int_{B} \nabla u
\cdot\nabla \varphi \rho^\gamma dm+ (\beta-\gamma)\int_B m\cdot
\nabla u\varphi \rho^{\gamma-1}dm+ \int_{B} \kappa m\cdot \nabla u
\varphi \rho^\gamma dm.
\end{equation*}
We may obtain the existence and uniqueness for \eqref{u11}-\eqref{u13} from the same argument of the well-posedness for $U$-problem \eqref{u1}-\eqref{u3}, if there is an energy estimate of $\bf L_0[u,u;t]$ which is similar to $\bf L [u,u;t]$ in Lemma \ref{lem2}. Indeed, for $u \in L^2(0,T; \Hg)$
\begin{equation*}
\frac12 \int_B |\nabla u|^2\rho^\gamma dm = \bf L_0[u,u;t] - \frac{\beta-\gamma}2 \left(\int_B m\cdot \nabla
u^2 \rho^{\gamma-1} dm \right)-\int_B \kappa m\cdot \nabla uu
\rho^{\gamma}dm.
\end{equation*}
We now claim that
\begin{equation}\label{rev1}
\int_B m\cdot \nabla u^2 \rho^{\gamma-1} dm \leq 0.
\end{equation}
Given this together with $\gamma >\beta$ from \eqref{gamma} we have
\begin{eqnarray*}
\frac12 \int_B |\nabla u(\cdot, t)|^2\rho^\gamma dm &\leq& \bf
L_0[u,u;t] -\int_B
\kappa m\cdot \nabla u u \rho^{\gamma}dm\\
&\leq& \bf L_0[u,u;t] + ||\kappa||_{L^\infty(0,T)} \sqrt b \left(
\frac{1}{2\delta } \int_B |\nabla u|^2 \rho^{\gamma} dm +
\frac\delta 2 \int_B |u|^2\rho^\gamma dm\right)
\end{eqnarray*}
for any $\delta>0$. By taking $\delta > ||\kappa||_{L^\infty(0,T)}
\sqrt b$, we obtain
\begin{equation*}
C'_1 ||u(t,\cdot)||^2_{H_\gamma^1} \leq \bf L_0[u,u;t] + C'_2
||u(t,\cdot)||^2_{L^2_\gamma}
\end{equation*}
for appropriate constants $C'_1$ and $C'_2$.

To verify the claim \eqref{rev1}, we define the trace operator $\mathcal{T}_0$ such that
\begin{eqnarray*}
       \T_0: &H^1_{\gamma}(B)& \to L^2(\p B)\\
          &u & \mapsto u\rho^{\frac{\gamma-1}2}|_{\p B}.
\end{eqnarray*}
Integration by parts on \eqref{rev1} yields
\begin{equation*}
\int_B m\cdot \nabla u^2 \rho^{\gamma-1} dm = -\int_B u^2
\left(n\rho^{\gamma-1} + 2(1-\gamma)|m|^2 \rho^{\gamma-2}\right)dm +
\sqrt b\int_{\p B}u^2\rho^{\gamma-1}dS,
\end{equation*}
or
\begin{eqnarray*}
\sqrt b\int_{\p B}u^2\rho^{\gamma-1}dS &=& 2\int_B m\cdot \nabla
u u \rho^{\gamma-1} dm + \int_B u^2 \left(n\rho^{\gamma-1} +
2(1-\gamma)|m|^2 \rho^{\gamma-2}\right)dm\\
&\leq & C ||u||^2_{H^1_\gamma}.
\end{eqnarray*}
Thus $\T_0$ is well defined, and for $u\in \Hg$, $\mathcal{T}_0 (u)=0$. Finally we obtain
\begin{equation*}
\int_B m\cdot \nabla u^2 \rho^{\gamma-1} dm = -\int_B u^2
\left(n\rho^{\gamma-1} + 2(1-\gamma)|m|^2 \rho^{\gamma-2}\right)dm \leq 0.
\end{equation*}
 This shows that there is a unique weak solution $u$ of
\eqref{u11}-\eqref{u13}, and thus $w\in L^2(0,T;\Hg)$ of
\eqref{w11}-\eqref{w13}.

Finally, $f= (w+g)\rho$ is a solution of \eqref{1}-\eqref{3}
satisfying \eqref{bdy2} for each $g$.  Hence the uniqueness of
\eqref{bdy2}-\eqref{2} fails as stated in Proposition \ref{prop2}.

\section{Conclusions }
In this paper, we have identified  a sharp Dirichlet-type boundary
requirement  to establish global existence of weak solutions to the
microscopic FENE model which is a component of bead-spring type
Navier-Stokes-Fokker-Planck models for dilute polymeric fluids. Such
a boundary requirement states that the distribution near boundary
approaches  zero faster than the distance function.  With this
condition, we have been able to show the uniqueness of  weak
solutions in the weighted Sobolev space $H^1_{-b/2}(B)$, which is
equivalent to the widely adopted weighted function space
$\rho^{b/2}H^1_{b/2}(B)$ for Fokker-Planck equations with the FENE
potential.   Moreover,  this condition ensures that the distribution
remains a probability density.  The sharpness of the boundary
condition was shown by construction of infinitely many solutions
when the boundary requirement fails.  In other words,  such a
condition provides a threshold on the boundary requirement:  subject
to this condition or any stronger ones incorporated through a
weighted function space,  the Fokker-Planck dynamics will select the
physically relevant solution, which is a probability density, see
e.g. \cite{BaSchSu05, JLLO06, BaSu07, LiMa07, Ma08, KS09},  and
converges to the equilibrium solution $Z^{-1}\rho^{b/2}$
\cite{JLLO06};  any weaker boundary requirement may lead to more
solutions, each depending on the rate  of  $f/d$ near boundary.  A
detailed elaboration of boundary conditions  for the coupled
Navier-Stokes-Fokker-Planck model will be the goal of our work
\cite{LiSh09}.

\bigskip
\section*{Acknowledgments} Shin thanks Professor Paul Sacks for stimulating discussions on Proposition \ref{prop2}.
We thank the referee for valuable suggestions and pointing out two relevant references \cite{KS09} and
\cite{BaSu08}.  Liu's research was partially supported by the
National Science Foundation under Kinetic FRG grant DMS07-57227 and grant DMS09-07963.

%\bibliography{Liu}

%plain or abbrv or siam or alpha
\end{document}